\documentclass[a4paper,10pt]{amsart}
\usepackage{amssymb,amsmath,amsfonts,amsthm}
\usepackage[dvips]{graphicx}
\usepackage{tikz-cd} 
\usepackage{psfrag}
\usepackage{xcolor}
\usepackage{textcomp}

\newtheorem{theorem}{Theorem}[section]

\newtheorem{addendum}[theorem]{Addendum}

\newtheorem{proposition}[theorem]{Proposition}
\newtheorem{lemma}[theorem]{Lemma}
\newtheorem{corollary}[theorem]{Corollary}

\newtheorem{definition}[theorem]{Definition}
\newtheorem{remark}[theorem]{Remark}

\newtheorem{question}{Question}



\newcommand{\new}[1]{{\bf#1}}

\newcommand{\occult}[1]{}

\usepackage{soul}

\newcommand\conorm{\operatorname{conorm}}

\newcommand\supp{\operatorname{supp}}


\begin{document}

\title[On the number of ergodic measures of maximal entropy] {On the number of ergodic measures of maximal entropy for partially hyperbolic diffeomorphisms with compact center leaves}


\begin{abstract}
In this paper, we study the number of ergodic measures of maximal entropy for partially hyperbolic diffeomorphisms defined on $3-$torus with compact center leaves. Assuming the existence of a periodic leaf with Morse-Smale dynamics we prove a sharp upper bound for the number of maximal measures in terms of the number of sources and sinks of Morse-Smale dynamics. A well-known class of examples for which our results apply are the so-called Kan-type diffeomorphisms admitting physical measures with intermingled basins.
\end{abstract}

\author{ Joas Elias Rocha and Ali Tahzibi}

\address{A.~Tahzibi, Instituto de Ci\^{e}ncias Matem\'aticas e de Computa\c{c}\~ao, Universidade de S\~ao Paulo (USP), \emph{E-mail address:} \tt{tahzibi@icmc.usp.br}}

\address{J.~Rocha, Unidade Acad\^emica de Cabo de Santo Agostinho, Universidade Federal Rural de Pernambuco, \emph{E-mail address:} \tt{joas.rocha@ufrpe.br}}

\thanks{A.T is supported by FAPESP 107/06463-3 and CNPq (PQ) 303025/2015-8.}

\maketitle

\section{Introduction}

Measures of maximal entropy (M.M.E) are global maxima for the Kolmogorov-Sinai entropy map $\mu \rightarrow h_{\mu}(f)$ where $f: M \rightarrow M$ is a continuous transformation defined on a compact metric space $M$. In this paper by m.m.e we mean {\bf ergodic} measures of maximal entropy. These measures, when exist, are considered among natural invariant measures of $f$.   Newhouse \cite{Newhouse89} studied an upper bound on the defect in uppersemicontinuity of topological and metric entropy and proved upper semi continuity of $\mu \rightarrow h_{\mu}(f)$ under $C^{\infty}$ regularity condition on $f.$ As a consequence any $C^{\infty}$ diffeomorphism of a compact manifold admits at least one measure of maximal entropy.

However, there are examples of (with high regularity $C^r,0 < r < \infty$) diffeomorphisms without any measure of maximal entropy (See for instance \cite{misiu}, \cite{Buzzi}.)

It is natural to study the existence of maximal entropy measures among diffeomorphisms which preserve some additional structure.  In the context of dynamical systems admitting some hyperbolicity, the question of the existence of maximal measures has been studied by several authors. After the classical results of existence and uniqueness in the uniformly hyperbolic case, a next step is to study the partially hyperbolic setting. See  for instance \cite{CPZ}, \cite{U}, \cite{BFSV}, \cite{RHRHTU} among other results in the references.

For a diffeomorphism $f:M\to M$ of a compact manifold to itself recall the norm and conorm with respect to a subspace of $V\subset T_xM$ for some $x\in M$: $\|Df| V\|:=\max\{\|Tf(v)\|:v\in V,\; \|v\|=1\}$ and
 $$\begin{aligned}
   &\conorm(Df| V) := \min\{\|Tf(v)\|:v\in V,\; \|v\|=1\}.
 \end{aligned}$$
 A splitting $E\oplus F$ is dominated\footnote{This is sometimes called \emph{pointwise} domination, see \cite{AbdenurViana}.} if it is nontrivial, invariant, and if there is some $N\geq1$ such that, for all $x\in M$:
   $$
      \|Df^N|E_x\|<\frac12\conorm(Df^N|F_x).
   $$

\begin{definition}
A diffeomorphism is  \new{(strongly) partially hyperbolic} if there is an invariant splitting of the tangent bundle: $TM=E^s\oplus E^c\oplus E^u$ such that $E^s\oplus(E^c\oplus E^u)$ and $(E^s\oplus E^c)\oplus E^u$ are dominated, $E^s$ is uniformly contracted, and $E^u$ is uniformly expanded. 
\end{definition}

 It is a well-known fact that there are foliations $\mathcal{F}^{*}$ tangent to the distributions $E^{*}$
for $*=s,u$ . The leaf of $\mathcal{F}^{*}$
containing $x$ will be denoted by $\mathcal{F}^{*}(x)$, for $*=s,u$. \par
In general it is not true that there is a foliation tangent to
$E^c$.  It can fail to be true even if $\dim E^c =1$ (\cite{RHU}) for partially hyperbolic dynamics defined on $\mathbb{T}^3$. We shall say that $f$ is {\it dynamically coherent} if there exist invariant foliations tangent to $E^{c *}=E^c \oplus E^{*}$  for $*=s,u$ (and then, to $E^c$). Along this paper all partially hyperbolic diffeomorphisms will be dynamically coherent.

We shall say that a set $X$ is {\it $*$-saturated} if it is a union
of leaves of the strong foliations $\mathcal{F}^{*}$ for $*=s$ or $u$. We also say
that $X$ is $su$-saturated if it is both $s$- and $u$-saturated. The
{\it accessibility class} $Acc(x)$ of the point $x\in M$ is the
minimal $su$-saturated set containing $x$. Note that the
accessibility classes form a partition of $M$. In case there is some
$x\in M$ whose accessibility class is $M$, then the diffeomorphism
$f$ is said to have the {\it accessibility property}. This is
equivalent to say that any two points of $M$ can be joined by a path
which is piecewise
tangent to $E^s$ or to $E^u$. \par%

If the center bundle is one dimensional or  decomposes into one dimensional sub-bundles the results in \cite{DFPV}, \cite{LVY}  show the existence of m.m.e's. However the quest for uniqueness or finiteness of ergodic measures of maximal entropy is a much more delicate problem in this setting.

In this paper we address the problem of determining the number of ergodic measures of maximal entropy of $C^2$ partially hyperbolic diffeomorphisms on $\mathbb{T}^3$ with compact center leaves.  There is another important class of partially hyperbolic diffeomorphisms on $\mathbb{T}^3$ which are isotopic to Anosov diffeomorphisms. For this class R. Ures (\cite{U}) proved uniqueness of measure of maximal entropy (See also a similar result in \cite{BFSV}).

In \cite{RHRHTU} the authors proved the following dichotomy for $C^{1+\alpha}, \alpha > 0$ dynamically coherent partially hyperbolic with accessibility property and compact center leaves: Either $f$ is ``rotation type" with a unique non hyperbolic measure of maximal entropy or there exist only finitely many hyperbolic ergodic m.m.e and there exists at least one measure with negative and another with positive center Lyapynov exponent.
More precisely they proved the following theorem.
\begin{theorem} \label{dichotomy} \cite{RHRHTU}
Let $f : M \rightarrow M$ be a $C^{1+\alpha}$ partially hyperbolic diffeomorphism of a 3-dimensional closed manifold $M$. Assume that $f$ is dynamically coherent with compact one dimensional central leaves and has the accessibility property. Then $f$ has finitely many ergodic measures of maximal entropy. There are two possibilities:
\begin{enumerate}
\item  (rotation type) $f$ has a unique entropy maximizing measure $\mu$. The center Lyapunov exponent $\lambda_c(\mu)$ vanishes and $(f, \mu)$ is isomorphic to a Bernoulli shift,

\item (generic case) $f$ has more than one ergodic entropy maximizing measure, all of which with non vanishing center Lyapunov exponent. The center Lyapunov exponent $\lambda_c(\mu)$ is nonzero and $(f, \mu)$ is a finite extension of a Bernoulli shift for any such measure $\mu.$ Some of these measures have positive central exponent and some have negative central exponent. \end{enumerate}
Moreover, the diffeomorphisms fulfilling the conditions of the second item form a $C^1-$open and $C^{\infty}-$dense subset of the dynamically coherent partially hyperbolic diffeomorphisms with compact one dimensional central leaves.
\end{theorem}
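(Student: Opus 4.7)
The plan is to descend the dynamics to the leaf space of $\mathcal F^c$ and then classify the possible lifts. Since $\mathcal F^c$ is a foliation by compact $1$-manifolds of the closed $3$-manifold $M$, the quotient $N := M/\mathcal F^c$ is a topological $2$-manifold (a $2$-torus in the $\mathbb T^3$ case) and $f$ induces a topologically Anosov homeomorphism $\bar f\colon N\to N$ whose stable and unstable foliations are the images of $\mathcal F^s$ and $\mathcal F^u$. Classical uniform hyperbolic theory then provides a unique m.m.e.\ $\bar\mu$ for $\bar f$, isomorphic to a Bernoulli shift. First I would show that any ergodic m.m.e.\ $\mu$ of $f$ satisfies $\pi_*\mu=\bar\mu$ and $h_\mu(f)=h_{\bar\mu}(\bar f)=h_{\mathrm{top}}(f)$: the fiber dynamics $f|\mathcal F^c(x)$ are circle homeomorphisms and hence of vanishing topological entropy, so by Ledrappier--Walters the fiber contribution to $h_\mu(f)$ is zero. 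The problem thereby reduces to counting ergodic lifts of $\bar\mu$.

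Next I would split the analysis according to the center Lyapunov exponent $\lambda^c(\mu)$, which is constant by ergodicity. If $\lambda^c(\mu)=0$ (rotation case), the disintegration of $\mu$ along center leaves must be invariant under the holonomy pseudogroup generated by $\mathcal F^s\cup\mathcal F^u$; because accessibility makes this pseudogroup act transitively on $M$, one expects at most one lift of $\bar\mu$ with vanishing center exponent. The Bernoulli conclusion then follows from an Abramov--Rokhlin extension argument over the Bernoulli base combined with the Kolmogorov property coming from Pesin theory in the $\mathcal F^s$, $\mathcal F^u$ directions. If $\lambda^c(\mu)\neq 0$ (generic case), $\mu$ is hyperbolic, and finiteness of ergodic hyperbolic m.m.e.\ follows from a standard $su$-saturation argument for their Pesin homoclinic classes together with compactness of center leaves, which prevents infinitely many pairwise disjoint $su$-saturated supports from coexisting. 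Each such $\mu$ is then a finite extension of a Bernoulli shift by Pesin--Ornstein theory. That both signs of $\lambda^c$ must occur would be obtained by applying Ruelle's inequality to $f$ and $f^{-1}$ and comparing with $h_{\mathrm{top}}(f)$: a uniform sign across all ergodic m.m.e.\ would violate the equality at $h_{\mathrm{top}}$ for one of the two maps.

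For the $C^1$-open, $C^\infty$-dense statement, I would observe that the rotation condition demands the vanishing of a Birkhoff cocycle average along center circles, which can be broken by a $C^\infty$-small perturbation localized near a periodic center leaf, yielding $C^\infty$-density of the generic case; $C^1$-openness then follows from lower-semicontinuity of the number of hyperbolic ergodic m.m.e.\ under the standing assumptions. The main obstacle I expect is establishing uniqueness in the rotation case: proving that the only $su$-holonomy-invariant lift of $\bar\mu$ is unique is a cohomological rigidity statement requiring delicate control of the center holonomy cocycle and its interplay with the Bernoulli base, and this is the technical heart of the dichotomy.
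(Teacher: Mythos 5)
First, a point of record: the paper does not prove this theorem --- it is quoted verbatim from \cite{RHRHTU} as background, so there is no in-paper proof to compare against. Your architecture does match the known proof from that reference and the machinery this paper recalls in Section 3: quotient to the leaf space, where the induced map is a topologically Anosov homeomorphism with a unique Bernoulli m.m.e.; Ledrappier--Walters to show fibers contribute no entropy, so every m.m.e.\ of $f$ is an ergodic lift of $\bar\mu$; then a dichotomy on $\lambda^c$, with the Avila--Viana invariance principle plus accessibility handling the zero-exponent case and atomic disintegrations (Ruelle--Wilkinson) handling the hyperbolic case. That skeleton is right.

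However, two of your steps would fail as stated. (1) The coexistence of both signs of $\lambda^c$ in the generic case cannot be extracted from Ruelle's inequality applied to $f$ and $f^{-1}$: for an m.m.e.\ $\mu$ with $\lambda^c(\mu)<0$ these give only $h_{\mathrm{top}}(f)\leq\lambda^u(\mu)$ and $h_{\mathrm{top}}(f)\leq -\lambda^s(\mu)-\lambda^c(\mu)$, and neither bound is violated if all m.m.e.'s had negative center exponent (there is no matching lower bound on entropy in terms of exponents for non-SRB measures). The actual mechanism is the \emph{twin measure} construction that the present paper recalls in its Section 3.3: the positively-oriented endpoint of the Pesin weak-stable arc inside the center circle defines an equivariant injection $\beta$, and $\beta_*\mu$ is an isomorphic m.m.e.\ with non-negative, hence (in the generic case) positive, center exponent. (2) Your finiteness argument --- ``compactness of center leaves prevents infinitely many pairwise disjoint $su$-saturated supports'' --- is not correct: supports of hyperbolic m.m.e.'s are only one-sidedly saturated ($u$-saturated for negative center exponent, cf.\ Proposition \ref{u-sature}), not $su$-saturated (under accessibility a closed $su$-saturated set would be all of $M$), and without accessibility one really can have countably many hyperbolic m.m.e.'s, as the product examples $A\times g$ mentioned in the introduction show. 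Finiteness must use accessibility essentially, through the fact that the conditional measures of a hyperbolic m.m.e.\ are finitely many atoms of equal mass on each center circle and that the resulting atom configurations of negative- and positive-exponent measures interlace and are controlled by the $su$-holonomy pseudogroup; this is the combinatorial core of \cite{RHRHTU} and is missing from your sketch. A smaller caveat: that the leaf space $M/\mathcal F^c$ is a genuine $2$-torus (rather than merely an orbifold) and the quotient map is Anosov is itself a nontrivial input you should cite rather than assert.
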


In the generic case of the above theorem (second item), the number of ergodic m.m.e's is larger than one. However still one can hope uniquenss of m.m.e fixing the sign of center Lyapunov exponent. In this paper we analyse the number of m.m.e with positive (negative) exponent assuming some condition on the dynamics of one periodic leaf.

Ures, Viana and J. Yang \cite{UVY} proved an optimal quantitative result when $M \neq \mathbb{T}^3$ is a nil-manifold: 
\begin{theorem}  \label{nilnott3}\cite{UVY}
Let $f$ be a $C^2$
 partially hyperbolic diffeomoprhism on a $3-$dimensional nilmanifold $M \neq \mathbb{T}^3$. Then 
 \begin{itemize}
 \item either $f$ has a unique maximal meaure, in which case $f$ is conjugate to rotation extension of an Anosov diffeomorphism (rotation type) and the maximal measure is supported on the whole manifold and has vanishing center exponent;
 \item or $f$ has exactly two ergodic maximal measures $\mu^+$, $\mu^-$, with positive and negative center exponents, respectively.
\end{itemize} 

 \end{theorem}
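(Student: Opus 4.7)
The plan is to combine Theorem~\ref{dichotomy} with additional rigidity coming from the nilmanifold structure to pin down the count to exactly two in the generic case. First, I would confirm the hypotheses of Theorem~\ref{dichotomy}: Hammerlindl--Potrie's classification guarantees that any partially hyperbolic diffeomorphism on a non-toral $3$-nilmanifold is dynamically coherent with a compact center foliation by circles, and accessibility can be read off from the structure of the linear model. Applying Theorem~\ref{dichotomy}, either we are in the rotation case (directly yielding the first bullet) or $f$ admits finitely many ergodic m.m.e., all hyperbolic, with both signs of center exponent represented.

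In the generic case, the task reduces to showing that there is \emph{exactly one} m.m.e.\ with positive center exponent (symmetrically, one with negative). The main tool is the algebraic semiconjugacy $\pi\colon M \to \TT^2$ to the abelianization: $\pi\circ f = A_0\circ\pi$ for a hyperbolic automorphism $A_0$ of $\TT^2$, the map $\pi$ collapses each center circle to a point, and it preserves topological entropy, so $h_{\top}(f)=h_{\top}(A_0)$. For any ergodic m.m.e.\ $\mu$ of $f$, the push-forward $\pi_*\mu$ is $A_0$-invariant with entropy at most $h_\mu(f)=h_{\top}(A_0)$; since $A_0$ has a unique measure of maximal entropy (Haar on $\TT^2$), an Abramov-type argument, exploiting that the center fibers contribute zero topological entropy, would force $\pi_*\mu$ to be Haar.

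For an m.m.e.\ $\mu^+$ with $\lambda^c(\mu^+)>0$ the splitting $TM = E^s \oplus E^{cu}$ is hyperbolic, and comparing $h_{\mu^+}(f)=h_\top(A_0)$ with Ruelle's inequality pushes Pesin's entropy formula to saturation along $E^{cu}$; hence $\mu^+$ has absolutely continuous conditional measures along leaves of $\Fcu$, i.e.\ it is a $cu$-Gibbs (SRB-type) measure for the foliation $\Fcu$. I would then invoke minimality of $\Fcu$---which follows from minimality of the unstable foliation of $A_0$ on $\TT^2$ together with connectivity of the fibers of $\pi$, a structural feature of non-toral $3$-nilmanifolds---and a Hopf-style argument along strong stable leaves to conclude that there is at most one such $cu$-SRB m.m.e. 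The symmetric argument for $\lambda^c<0$ using $\Fcs$ yields exactly one m.m.e.\ with negative center exponent, hence the claimed total of two.

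The principal obstacle is this last SRB/uniqueness step: one needs to (a) upgrade the entropy identity to absolute continuity of conditionals on $\Fcu$-leaves despite the center direction being only weakly and non-uniformly expanding, and (b) use minimality of $\Fcu$---available on non-toral nilmanifolds but typically absent on $\TT^3$---to run the Hopf argument and collapse two candidate $cu$-SRB measures into one. The failure of minimality on $\TT^3$ is precisely the structural reason why the sharp count of exactly two does not carry over to the torus, and why a different analysis is required in the main body of the present paper.
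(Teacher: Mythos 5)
Your overall skeleton (run the dichotomy of Theorem~\ref{dichotomy}, then prove uniqueness of the m.m.e.\ for each sign of the center exponent) matches what \cite{UVY} does, but the mechanism you propose for the per-sign uniqueness step does not work. The Ruelle/Pesin saturation argument fails at the first move: you have $h_{\mu^+}(f)=h_{\top}(f)=h_{\top}(A_0)$, while Ruelle's inequality bounds $h_{\mu^+}(f)$ by $\lambda^u(\mu^+)+\lambda^c(\mu^+)$, and there is no reason these two quantities coincide --- when $\lambda^c(\mu^+)>0$ the sum of positive exponents generically \emph{exceeds} the entropy, so nothing forces equality in Pesin's formula and $\mu^+$ need not be a $cu$-Gibbs state. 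In fact it cannot be: by the Ruelle--Wilkinson argument recalled in the twin-measure subsection, any hyperbolic m.m.e.\ in this setting has \emph{atomic} conditional measures along the compact center circles, which is incompatible with absolute continuity of conditionals along the two-dimensional Pesin unstable manifolds tangent to $E^c\oplus E^u$. (The Kan-type measures $\nu_1,\nu_2$ in Section~\ref{kantype}, supported on graphs $x\mapsto\sigma(x)$, illustrate exactly this.) So the ``$cu$-SRB plus Hopf argument'' route collapses.

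You have also misidentified the structural feature of non-toral nilmanifolds that makes the count sharp. Minimality of $\mathcal{F}^{cu}$ is \emph{not} the issue: each $\mathcal{F}^{cu}$-leaf is the full preimage under $\pi$ of a leaf of $\mathcal{W}^u$, which is minimal because the quotient is a transitive Anosov homeomorphism of $\TT^2$ --- and this holds on $\TT^3$ just as well. What is special about $M\neq\TT^3$ (Proposition 1.9 and 6.4 of \cite{HP14}, as quoted after Remark~\ref{supports}) is the uniqueness of the compact, invariant, minimal set saturated by the \emph{strong} unstable foliation $\mathcal{F}^u$; on $\TT^3$ this fails (the invariant $su$-tori of the Kan example are proper closed $\mathcal{F}^u$-saturated sets). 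The actual argument is then elementary and measure-class free: by Proposition~\ref{u-sature} the supports of two distinct ergodic m.m.e.'s with negative center exponent are disjoint, closed, invariant and $\mathcal{F}^u$-saturated, so each contains a compact minimal $\mathcal{F}^u$-saturated invariant set; uniqueness of that minimal set forbids two such measures, and the symmetric statement with $\mathcal{F}^s$ handles the positive-exponent side. No entropy-formula or absolute-continuity input is needed.
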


We emphasize that partially hyperbolic diffeomorphisms on $3-$nilmanifolds other than $\mathbb{T}^3$ have some  nice topological properties (\cite{HHUN}, \cite{HP14}) which are essential to the proof of the above theorem: Any such diffeomorphism is accessible, dynamically coherent and it has a unique compact, invariant, $u-
$saturated (respectively, $s-$saturated) minimal subset. 

In the setting of partially hyperbolic diffeomorphisms of $\mathbb{T}^3$ there is no ``general result" on the number of measures of maximal entropy.  Multiplying Anosov diffeomorphism on $\mathbb{T}^2$ to an appropriate one dimensional dynamics it is easy to get simples examples with any (countable)number of hyperbolic m.m.e and co-existence of hyperbolic and non-hyperbolic maximal measures. So, the interesting question is to verify the number of m.m.e's under some restriction hypothesis on the center dynamics or accessibility of the partially hyperbolic system.

In Theorem \ref{dicho} (resp. \ref{rotation}) we assume that the dynamics of a center leaf is Morse-Smale (resp. with irrational rotation number). We also give examples (Theorem \ref{kan} and section \ref{lastexample} ) to show the optimality of results.

\section{Statement of results}
Let $f: \mathbb{T}^3 \rightarrow \mathbb{T}^3$ be a dynamically coherent partially hyperbolic diffeomorphism with compact center leaves. Let $\mathcal{F}^c(x)$ be a periodic leaf with period $n.$ We say $f$ has $k-$Morse-Smale dynamics on this leaf if $f^n : \mathcal{F}^c(x) \rightarrow \mathcal{F}^c(x)$ is a Morse-Smale dynamics with $k$ sinks and $k$ sources. As an easy example of such dynamics  consider direct product $f:= A \times g$ where $A$ is an Anosov diffeomorphism on $\mathbb{T}^2$ and $g$ is an apropriate (to guarantee partial hyperbolicity)  $1-$Morse-Smale dynamics on the circle. It is clear that such an example admits exactly one ergodic measures of maximal entropy with positive center Lyapunov exponent and another ergodic m.m.e's with negative exponent.  
However, $f_k$ is not accessible and it is not topologically transitive. For non trivial and transitive example we may consider Kan-Type diffeomorphisms (See Section \ref{kantype}) . These examples have two center leaves with $1$-Morse Smale dynamics. 

 Here we prove a dichotomy for all partially hyperbolic dynamics which admit a $1-$Morse-Smale dynamics. 
 In the following theorem by an $su-$torus we mean a $2-$torus which is an accessibility class.

\begin{theorem} \label{dicho}
Let $f : \mathbb{T}^3 \rightarrow \mathbb{T}^3$ be a $C^2$ dynamically coherent partially hyperbolic diffeomorphism with compact center leaves.  Suppose that $f$ has a periodic center leaf with $1-$Morse-Smale dynamics then at least one of the following occurs:

\begin{enumerate}
\item  Either the number of ergodic maximal measures is precisely two (one with positive center expnent and one with negative center exponent) or 
\item there exists an invariant $su-$torus.
\end{enumerate}

\end{theorem}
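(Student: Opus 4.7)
The plan is to split into two cases according to whether $f$ has the accessibility property, and in the accessible case to combine atomic center disintegration with the Morse--Smale structure on the distinguished leaf.

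First, suppose $f$ fails the accessibility property. Since $f$ is $C^2$, dynamically coherent, with compact one-dimensional center leaves on $\mathbb{T}^3$, the classification of accessibility classes developed by Rodriguez Hertz--Rodriguez Hertz--Ures (and exploited in the proof of Theorem \ref{dichotomy}) shows that the union of non-open accessibility classes forms a non-empty closed $f$-invariant $su$-saturated lamination. On $\mathbb{T}^3$ with compact center leaves, the leaves of such a lamination are $2$-tori tangent to $E^s \oplus E^u$, which gives alternative (2).

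Next, assume $f$ is accessible. Then Theorem \ref{dichotomy} applies. The rotation-type alternative is ruled out by hypothesis: in that case every periodic center leaf carries dynamics topologically conjugate to a rigid rotation, so $f^n|_{\mathcal{F}^c(x)}$ cannot exhibit a sink. Thus $f$ falls into the generic case, and one obtains finitely many ergodic hyperbolic m.m.e.s with at least one of each sign of center Lyapunov exponent. It remains to show there is exactly one of each sign.

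To this end, let $\mu^-$ be an ergodic m.m.e. with $\lambda_c(\mu^-)<0$. Since $\supp(\mu^-)$ is $su$-saturated, $f$-invariant and closed, accessibility forces $\supp(\mu^-)=\mathbb{T}^3$; in particular $\mathcal{F}^c(x) \subset \supp(\mu^-)$. By the atomic disintegration result for hyperbolic m.m.e.s in the partially hyperbolic setting with compact one-dimensional center (as used in the proof of Theorem \ref{dichotomy}), the conditional of $\mu^-$ along center leaves is purely atomic with a constant number $k^-$ of atoms per leaf on a set of full $\mu^-$-measure. Using $su$-holonomy one transports these atomic configurations to the periodic leaf $\mathcal{F}^c(x)$, obtaining a finite $f^n$-invariant subset whose points must have negative asymptotic center Jacobian equal to $\lambda_c(\mu^-)$. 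In the Morse--Smale dynamics $f^n|_{\mathcal{F}^c(x)}$ the only periodic points with negative derivative are the sinks; with a unique sink on $\mathcal{F}^c(x)$, this forces $k^-=1$. The disintegration of $\mu^-$ is then the Dirac mass at the sink on $\mathcal{F}^c(x)$, propagated elsewhere by $su$-holonomy; two ergodic m.m.e.s with negative center exponent would therefore share the same disintegration and hence coincide. The symmetric argument applied to $f^{-1}$, using the unique source of $f^n|_{\mathcal{F}^c(x)}$, yields uniqueness of the ergodic m.m.e. with positive center Lyapunov exponent.

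The main obstacle will be making rigorous the transport of the conditional of $\mu^-$ to the specific leaf $\mathcal{F}^c(x)$ (which is a priori in a measure-zero set of center leaves) and identifying the transported atoms as dynamical sinks: this requires upper semi-continuity of the atomic count under $su$-holonomy together with a Pesin-type exponent argument at periodic points on $\mathcal{F}^c(x)$. The Morse--Smale hypothesis with a single sink is what collapses $k^-$ to $1$, which is also the reason the bound in (1) is sharp.
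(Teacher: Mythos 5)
There are genuine gaps in both branches of your case split, and the split itself (accessible vs.\ non-accessible) bypasses the place where the Morse--Smale hypothesis actually has to work. The paper instead splits according to whether some ergodic m.m.e.\ has vanishing center exponent.

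In your non-accessible branch you assert that the non-open accessibility classes form a lamination whose leaves are $2$-tori, hence an invariant $su$-torus exists. The first part (a compact invariant lamination) is indeed available from Rodriguez Hertz--Rodriguez Hertz--Ures, but the claim that its leaves are compact --- let alone $f$-invariant tori --- is not: a non-open accessibility class is a priori only an injectively immersed surface tangent to $E^s\oplus E^u$, and nothing you cite forces it to be closed. The paper never uses non-accessibility as such; it \emph{builds} the torus. When a zero-exponent m.m.e.\ exists, the invariance principle (Theorem \ref{invp} and Lemma \ref{inv-MS}) forces the conditional on the fixed Morse--Smale leaf to be supported on $\{a,r\}$, whence $Acc(a)$ meets every center leaf in at most two points, is compact, and covers $\mathbb{T}^2$ with one or two sheets --- that is where the $1$-Morse--Smale hypothesis enters and where the torus comes from. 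Your branch omits this mechanism entirely.

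In your accessible branch the key assertion ``$\supp(\mu^-)$ is $su$-saturated, hence equals $\mathbb{T}^3$'' is unjustified and is in fact the whole difficulty: Proposition \ref{u-sature} gives only $u$-saturation for a measure with negative center exponent ($s$-saturation holds for \emph{non-negative} exponent). Likewise, transporting the atomic center conditionals to the distinguished leaf ``by $su$-holonomy'' is not available, since holonomy invariance of disintegrations (the invariance principle) requires vanishing center exponent; for hyperbolic m.m.e.'s the conditionals are atomic but not $s$-invariant. The paper's substitute is Lemma \ref{geral}: assuming two negative-exponent m.m.e.'s with disjoint $u$-saturated supports, it uses local product structure to push points of both supports by $u$- and then $s$-holonomy into $\mathcal{F}^c(r)$, where the Morse--Smale dynamics drags all but one of them to the sink $a$, contradicting disjointness unless one support is a section of $\pi$; that section is then shown to be $su$-saturated (this is a \emph{conclusion}, obtained via $\mathcal{F}^s(r)\subset\supp(\mu)$ and an accumulation argument), producing an $su$-torus --- which under accessibility is impossible, yielding uniqueness. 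Your exponent-at-the-sink heuristic points in the right direction, but as written the proof does not close.
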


\begin{corollary}
 If $f$ satisfies the hypothesis of the above theorem and is accessible then $f$ admits one m.m.e with positive center Lyapunov exponent and one m.m.e with negative center Lyapunov exponent.
\end{corollary}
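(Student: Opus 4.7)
The plan is to derive the corollary as an immediate consequence of Theorem~\ref{dicho} by ruling out alternative (2). First I would invoke Theorem~\ref{dicho} to obtain the dichotomy: under the stated hypotheses, either there are exactly two ergodic measures of maximal entropy (with opposite signs of the center Lyapunov exponent), or there is an $f$-invariant $su$-torus in $\mathbb{T}^3$.

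Next I would argue that the accessibility hypothesis forces the first alternative. By the definition recalled just before Theorem~\ref{dicho}, an $su$-torus is a $2$-torus which coincides with an accessibility class of $f$; in particular it is a proper, non-empty, closed $su$-saturated subset of $\mathbb{T}^3$. On the other hand, the accessibility property asserts that some (hence every) accessibility class equals all of $\mathbb{T}^3$, so no proper $su$-saturated set can be an accessibility class. This contradicts the existence of an invariant $su$-torus, ruling out case (2).

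Therefore case (1) of Theorem~\ref{dicho} must hold, which gives exactly two ergodic m.m.e's, one with positive and one with negative center Lyapunov exponent, as claimed. The only conceivable obstacle is verifying that the definition of $su$-torus used in Theorem~\ref{dicho} really is a proper accessibility class; but this is tautological from the convention introduced in the paragraph preceding Theorem~\ref{dicho}, so no additional work is required and the corollary follows in a couple of lines.
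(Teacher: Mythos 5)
Your proposal is correct and is precisely the intended argument: the paper defines an $su$-torus as a $2$-torus which is an accessibility class, and under accessibility the only accessibility class is all of $\mathbb{T}^3$, so alternative (2) of Theorem~\ref{dicho} is vacuous and alternative (1) gives the corollary. The paper leaves this one-line deduction implicit, so there is nothing to add.
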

\begin{addendum}
In the above theorem if $f$ admits a periodic center leaf with $k-$Morse-Smale dynamics then either $f$ admits at most 
$2(k + \lfloor \frac{k}{2} \rfloor )$ hyperbolic m.m.e or there exists an invariant $su-$torus.
\end{addendum} \label{aden}

\begin{remark}
Let us emphasize that even assuming accessibility, without our hypothesis on Morse-Smale periodic leaf, no general result on the number of m.m.e's is known. It is well worth to recall that for any $l \geq 1$ after a $C^{l}$- local perturbation one can change the dynamics of a periodic leaf to obtain a $k-$Morse-Smale dynamics for some $k \in \mathbb{N}$ .
\end{remark}
To prove the above theorem we consider two following disjoint cases:
\begin{itemize}
\item $f$ admits an ergodic measure of maximal entropy with zero center exponent. 
\item $f$ admits no ergodic measure of maximal entropy with vanishing center exponent.
\end{itemize}

In the first case we will show the existence of an $su-$torus. 
In the second case we prove that either there exists a unique m.m.e with negative center exponent (and a unique one with positive center exponent) or there exists an invariant $su-$torus.

Another result which is immediate consequence of Proposition \ref{u-sature} is the following:
\begin{proposition}
 If $f$ satisfies the hypothesis of Theorem \ref{dicho} and all m.m.e's are hyperbolic then there are at most two m.m.e with negative center exponents (Similarly there are at most two m.m.e with positive center exponent.)
\end{proposition}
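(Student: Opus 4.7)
The proof rests on Proposition \ref{u-sature}, which I take to establish that for any hyperbolic ergodic m.m.e.\ $\mu$ with $\lambda_c(\mu) < 0$, the support $\supp(\mu)$ is $\mathcal{F}^u$-saturated. The plan is to combine this saturation with the $1$-Morse-Smale dynamics on the periodic center leaf $L$ to classify $\supp(\mu)$ into at most two structural classes, and then to invoke a uniqueness argument for the entropy-maximizing measure on each class.

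The first step is to descend to the Anosov quotient. Since $\mathcal{F}^c$ has compact leaves on $\mathbb{T}^3$, the quotient $\pi\colon \mathbb{T}^3 \to \mathbb{T}^3/\mathcal{F}^c$ identifies with a $2$-torus on which $f$ induces a transitive Anosov map, and the image $\pi(\mathcal{F}^u)$ is the (dense) unstable foliation of this Anosov. Hence $\pi(\supp(\mu))$ is closed, $f$-invariant, and $u$-saturated in the quotient, forcing it to equal $\mathbb{T}^2$; in particular, $\supp(\mu)$ meets $L$. Writing $p$ for the sink and $q$ for the source of $f^n|_L$, the intersection $K_\mu := \supp(\mu)\cap L$ is closed and $f^n|_L$-invariant, so it equals one of $\{p\}$, $\{q\}$, $\{p,q\}$, or $L$. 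I would exclude $K_\mu = \{q\}$ using that $\overline{\mathcal{F}^u(q)}$ is $u$-saturated and projects onto $\mathbb{T}^2$, which prevents its intersection with $L$ from being the single point $\{q\}$. In the remaining cases, any point of $L\setminus\{q\}$ lying in $\supp(\mu)$ converges under $f^n|_L$ to the sink $p$, so the closedness of $\supp(\mu)$ forces $p \in \supp(\mu)$.

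Having pinned down the possible structure, I would argue that there are at most two structurally distinct supports: those with $K_\mu = \{p\}$ and those in which $K_\mu$ contains all of $L$; on each such $u$-saturated, closed, $f$-invariant set the m.m.e.\ with negative center exponent is unique. This uniqueness should follow from the identity $h_\mu(f) = h_\mu(f,\mathcal{F}^u)$ for hyperbolic $\mu$ with $\lambda_c(\mu) < 0$, which makes $\mu$ a measure of maximal unstable entropy, together with Gibbs-$u$-state uniqueness on a $u$-saturated invariant set (a Ledrappier--Walters/Bowen-type specification along unstable leaves). The symmetric bound for positive center exponent follows upon replacing $f$ by $f^{-1}$. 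The main obstacle will be this last uniqueness step: reducing ``at most two structural classes of support'' to ``at most two ergodic m.m.e.'s'' requires, beyond Proposition \ref{u-sature}, a robust uniqueness result for Gibbs-$u$ states restricted to a closed $u$-saturated invariant subset, which is where Proposition \ref{u-sature} must do its heavy lifting.
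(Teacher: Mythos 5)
Your first half is on the right track, but the proof as a whole has a genuine gap, and it also misses the short route that makes the statement almost immediate. The correct argument is a pigeonhole: suppose $\mu_1,\mu_2,\mu_3$ are three ergodic m.m.e.'s with negative center exponent. Since all m.m.e.'s are hyperbolic, Proposition \ref{u-sature} gives that the three supports are pairwise \emph{disjoint}. Each $\supp(\mu_i)$ meets the periodic leaf $L$ (as you correctly show via the quotient Anosov), and $\supp(\mu_i)\cap L$ is a nonempty closed set invariant under the return map $f^n|_L$. For a $1$-Morse-Smale circle map every nonempty closed invariant set contains the sink or the source, so each of the three disjoint sets $\supp(\mu_i)$ must contain one of the two points $\{p,q\}$ --- impossible. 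You never use the disjointness of supports, which is the one place the hypothesis ``all m.m.e.'s are hyperbolic'' enters, and it is exactly what finishes the proof.

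Instead, your final step replaces this with the claim that on each closed, $u$-saturated, invariant ``structural class'' the m.m.e.\ with negative center exponent is unique, to be deduced from $h_\mu(f)=h_\mu(f,\mathcal{F}^u)$ and uniqueness of Gibbs-$u$-states / measures of maximal unstable entropy on a $u$-saturated set. No such uniqueness result is available here (nor in the paper), and it does not follow from Proposition \ref{u-sature}; as you concede yourself, this is ``the main obstacle,'' which means the proof is not complete. Two further inaccuracies: (i) the closed $f^n|_L$-invariant subsets of $L$ are not only $\{p\},\{q\},\{p,q\},L$ (any union of orbit closures together with the fixed points also qualifies), though this does not affect the pigeonhole; and (ii) your exclusion of $K_\mu=\{q\}$ is unjustified and in fact false in general --- $u$-saturation of $\overline{\mathcal{F}^u(q)}$ and surjectivity of its projection to $\mathbb{T}^2$ do not force it to meet $L$ in more than one point, and the paper's own proof of Theorem \ref{dicho} analyzes precisely a measure whose support meets the periodic leaf only at the repeller (the $su$-torus alternative, realized by the Kan-type examples). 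Fortunately neither the classification nor the exclusion is needed once you use disjointness of supports.
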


\begin{proof}
Suppose $\mu_i, i=1,2,3$ be three m.m.e with negative center Lyapunov exponent. We know that the support of each $\mu_i$ intersects any center leaf (See for instance inside the proof of Theorem \ref{dicho}). In particular $\supp(\mu_i)$ intersects the periodic center leaf  admitting $1-$Morse-Smale dynamics. By definition of $1-$Morse-Smale dynamics there exists one sink and one source on this periodic center leaf. As the support  is closed invariant set and by Proposition \ref{u-sature} supports of $\mu_i$ are disjoint we get a contradiction.

\end{proof}

 In the sequel we give examples of such dynamics with four ergodic measures of maximal entropy (two with positive and two with negative center exponent).  We also give a topologically transitive example with three ergodic measures of maximal entropy. The construction is based on examples of non-invertible maps appeared in the work of Nu\~{n}ez-Madariaga, S. Ramirez and C. Vasquez \cite{vasco}.
 The celebrated Kan example in the annulus is an endomorphism defined on $\mathbb{S}^1 \times [0, 1]$ with two physical measures with intermingled basins. Nu\~{n}ez-Madariaga, S. Ramirez and C. Vasquez show that besides these two physical measures (Lebesgue measure on $\mathcal{S}^1$) which are measure of maximal entropy (besides being physical) there exists a third measure of maximal entropy. They describe this third measure as limit of periodic measures.
  We interpret this third measure as a twin measure in the diffeomorphism setting.

\begin{theorem} \label{kan}
There exist partially hyperbolic diffeomorphisms satisfying the hypothesis of the above theorem with 4 m.m.e's two of them with negative and two others with positive center exponent. There is also topologically transitive example with 2 m.m.e with negative center exponent and one with positive center exponent.
\end{theorem}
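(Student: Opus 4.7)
My plan is to construct both examples as Kan-type skew products on $\mathbb{T}^3 = \mathbb{T}^2 \times \mathbb{S}^1$, adapting to the $C^2$ diffeomorphism setting the twin-measure construction of Nu\~nez--Madariaga, Ramirez and Vasquez \cite{vasco} for endomorphisms. The basic model is $F(x,t) = (A(x), g_x(t))$, where $A \in SL(2,\mathbb{Z})$ is Anosov on $\mathbb{T}^2$ and $g_x$ is a $C^2$ family of Morse--Smale circle diffeomorphisms whose fixed points do not depend on $x$. For appropriate choices the map $F$ is partially hyperbolic and dynamically coherent, each torus $\mathbb{T}^2 \times \{t_0\}$ with $g_x(t_0) \equiv t_0$ is $F$-invariant and $su$-saturated, and the restriction to every periodic center leaf inherits the Morse--Smale structure of $g_{x_0}$. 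The key entropy tool is the Ledrappier--Walters factor formula applied to $\pi\colon\mathbb{T}^3\to \mathbb{T}^2$: since the fiber is one dimensional and any $F$-invariant measure supported in an $su$-torus has vanishing fiber entropy, every such measure is maximal with $h_\mu(F) = h_{\top}(A) = h_{\top}(F)$.

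For the topologically transitive example with three ergodic m.m.e., I would take the classical Kan recipe with fixed points $\{0\}, \{1/2\}$ both attracting on fiber average, producing two physical measures $\mu_0,\mu_{1/2}$ with negative center exponent, each supported on an $su$-torus, both maximal by the identity above. To produce the third m.m.e.\ with positive center exponent I would transplant the argument of \cite{vasco} fiberwise: restricted to the complement of the two $su$-tori, the center direction is on average expanding, and one can construct, for each $x$ outside a null set, a fiberwise ``twin'' set carrying a zero-fiber-entropy measure of positive exponent. Lifting this fiber family through the $su$-holonomy of $F$ yields an ergodic $F$-invariant measure $\nu^+$ with $\lambda^c(\nu^+) > 0$ and $h_{\nu^+}(F) = h_{\top}(F)$, distinct from $\mu_0, \mu_{1/2}$ by its center exponent sign.

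For the four m.m.e.\ example I would arrange instead \emph{four} fiber fixed points $\{0, 1/4, 1/2, 3/4\}$, choosing the fiber derivatives to alternate in sign so that the corresponding $su$-tori are two attracting and two repelling, each pair intermingled \`a la Kan. This yields two physical (and maximal) measures with negative center exponent and, applying the same construction to $F^{-1}$ on the repelling tori, two with positive center exponent; the $1$-Morse--Smale condition on a periodic center leaf is guaranteed by passing to an appropriate iterate, and pairwise distinctness of the four m.m.e.\ follows from disjointness of their $su$-saturated supports (cf.\ Proposition \ref{u-sature}).

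The main obstacle I expect is verifying that the twin measure $\nu^+$ in the transitive example is genuinely \emph{ergodic} and that $h_{\nu^+}(F) = h_{\top}(F)$. Ergodicity requires choosing $g_x$ so that the fiber periodic orbits outside the two fixed points are uniformly hyperbolic and generate a transitive fiber horseshoe; the entropy identity then reduces, via Ledrappier--Walters, to showing that the projection of $\nu^+$ to $\mathbb{T}^2$ is the Bowen--Margulis measure of $A$, which is exactly the equidistribution statement proved in \cite{vasco} for periodic orbits of maximal fiber growth. Transitivity of the three-measure example is inherited from the accessibility and minimality built into the classical Kan construction.
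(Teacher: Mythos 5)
Your overall strategy (Kan-type skew products over a linear Anosov map, Ledrappier--Walters to identify m.m.e.'s with measures projecting to Haar, and a ``twin'' measure for the positive exponent) is the same as the paper's, but as written the two constructions have genuine gaps. First, the transitive example: if, as you propose, both circles $\{0\}$ and $\{1/2\}$ are fixed by every fiber map, then the two complementary bands $\mathbb{T}^2\times(0,1/2)$ and $\mathbb{T}^2\times(1/2,1)$ are invariant open sets, so $F$ is \emph{not} transitive on $\mathbb{T}^3$, and moreover each band carries its own basin-boundary twin measure, giving four m.m.e.'s rather than three. Your appeal to ``accessibility and minimality built into the classical Kan construction'' cannot rescue this: Kan-type examples with $su$-tori are never accessible (Ures--Vasquez). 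The paper's essential device, which you are missing, is to glue two copies of the cylinder example into a map $G$ that \emph{swaps} the two bands; this is what makes $G$ transitive and what merges the two graph measures into a single ergodic measure $\nu=\tfrac12(\nu_1+G_*\nu_1)$, yielding exactly $2+1$. Second, the positivity of the center exponent of the twin measure: your justification (``the center direction is on average expanding off the tori'') is false in the Kan example, where Lebesgue-a.e.\ point off the tori has negative center exponent. The paper's argument is different and necessary: the twin measure is the graph measure $\nu_1=\int\delta_{\sigma(x)}\,d\mathrm{Leb}(x)$ over the basin boundary $\sigma(x)$; its exponent is nonnegative because $\sigma(x)$ bounds the Pesin weak-stable arcs, and it cannot vanish because the invariance principle (Theorem \ref{invp}) would then force $\sigma$ to be continuous, contradicting intermingledness of the basins. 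Relatedly, your worries about ergodicity and about the projection being Bowen--Margulis evaporate in this formulation: the graph measure is isomorphic to $(A,\mathrm{Leb})$, hence ergodic, and projects to Haar by construction; no fiber horseshoe or periodic-orbit equidistribution is needed.

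Third, your four-m.m.e.\ example with four fiber fixed points $\{0,1/4,1/2,3/4\}$ does not satisfy the hypothesis of Theorem \ref{dicho}: the center leaf over a fixed point of $A$ then carries at least two sinks and two sources, so no periodic center leaf is $1$-Morse--Smale, and ``passing to an iterate'' only adds periodic points, it never removes them. (Also, with alternating attracting and repelling tori the basins are separated by the repelling tori, so no pair is intermingled.) The paper instead takes $H=G^2$ of the glued example: the fixed center leaf of $G$ still has exactly one sink and one source, while the band-swapping of $G$ splits $\nu$ into two ergodic positive-exponent measures $\nu_1,\nu_2$ for $G^2$, alongside the two negative-exponent Lebesgue measures on the tori. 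Finally, a small but real slip: it is not true that ``every'' invariant measure supported on an $su$-torus is maximal --- only those projecting to the m.m.e.\ of $A$ are; what you need is that the restriction of $F$ to each invariant torus is (conjugate to) $A$ itself, whose Haar measure does the job.
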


Observe that although the second example in the above theorem is topologically transitive, still we are not able to find an accessible example:
\begin{question}
Is there any accessible partially hyperbolic diffeomorphism satisfying hypothesis of Theorem \ref{dicho} with more than one ergodic m.m.e with positive center exponent?
\end{question}
We still do not know a clear dichotomy on the hyperbolicity of m.m.e's under the hypothesis of Theorem \ref{dicho}.  
\begin{question}
Is there any example of partially hyperbolic diffeomorphism satisfying hypothesis of Theorem \ref{dicho} admitting both hyperbolic and non-hyperbolic measures of maximal entropy?
\end{question}
We  emphasize that in the Kan tye examples stated in Theorem \ref{kan} the co-existence of hyperbolic and non-hyperbolic m.m.e's is ruled out using the intermingled property of hyperbolic m.m.e's and invariance principle argument. 

In the next theorem we assume the existence of a periodic leaf with irrational rotation number dynamics and prove a dichotomy.  
\begin{theorem} \label{rotation}
Let $f: \mathbb{T}^3 \rightarrow \mathbb{T}^3$ be a $C^2$-partially hyperbolic diffeomorphism, dynamically coherent with compact central leaves with one periodic leaf with irrational rotation number then:
\begin{enumerate}
\item Either there is unique measure of maximal entropy $\mu$, moreover $\lambda^c(\mu)=0$ and $f$ is conjugate to rotation extension of Anosov homeomorphism or
\item $f$ admits exactly two ergodic measures of maximal entropy. Both  measures are hyperbolic and have opposite sign of center Lyapunov exponent.
\end{enumerate}
\end{theorem}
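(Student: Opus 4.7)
\medbreak\noindent\textbf{Proof plan.}
Let $L=\Fc(x_0)$ denote the periodic center leaf of period $n$ on which $f^n|_L$ is conjugate to an irrational rotation, so $L$ is a minimal set for $f^n|_L$. The plan is to first show that every ergodic m.m.e.\ of $f$ has support containing $L$, and then to split on the sign of the center Lyapunov exponent.

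For the support lemma, compactness of the center leaves on $\mathbb T^3$ makes $f$ factor, via a continuous projection $\pi:\mathbb T^3\to\mathbb T^2$ whose fibers are the center leaves, through its linear part, a hyperbolic toral automorphism $A$, with $h_\top(f)=h_\top(A)$ (cf.\ \cite{HP14}). For any ergodic m.m.e.\ $\mu$, the fiberwise dynamics is a circle diffeomorphism and hence has zero entropy, so by the Abramov--Rokhlin formula $h_{\pi_*\mu}(A)=h_\mu(f)=h_\top(A)$. Hence $\pi_*\mu$ is the unique m.m.e.\ of $A$, the Haar measure on $\mathbb T^2$, which has full support; thus $\supp\mu$ meets every center leaf. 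Its intersection with $L$ is a nonempty closed $f^n|_L$-invariant subset of the minimal set $L$, and so equals $L$.

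If some ergodic m.m.e.\ $\mu$ has $\lambda^c(\mu)=0$, I would apply the Avila--Viana invariance principle, as in the proof of Theorem \ref{dichotomy} in \cite{RHRHTU}: the disintegration $\{\mu^c_y\}$ of $\mu$ along center leaves is invariant under the stable and unstable holonomies between center leaves. The conditional $\mu^c_{x_0}$ on $L$ is $f^n|_L$-invariant, hence equals Haar on $L$ by unique ergodicity of the irrational rotation. Propagating by $su$-holonomies, whose base traces are the dense stable and unstable manifolds of $A$, yields that $\mu^c_y$ is Haar on every center leaf. This determines $\mu$ uniquely and identifies $f$ as a continuous rotation extension of $A$, giving case (1).

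Otherwise every ergodic m.m.e.\ is hyperbolic. Two such measures of the same center sign have disjoint supports by Proposition \ref{u-sature}; yet both contain $L$, a contradiction. Hence there is at most one ergodic m.m.e.\ of each center sign. To produce one of each sign, I would consider an ergodic Gibbs $u$-state $\mu^u$ of $f$: the identity $\pi_*\mu^u=\mathrm{Haar}$ forces $h_{\mu^u}=h_\top(f)$ and $\lambda^u(\mu^u)=h_\top(A)$, and the Pesin entropy equality along $\Fu$ together with Ruelle's inequality then gives $\lambda^c_+(\mu^u)=0$, i.e.\ $\lambda^c(\mu^u)\leq 0$; excluding the zero case (already handled) yields $\lambda^c(\mu^u)<0$. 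A dual argument applied to $f^{-1}$, via Gibbs $s$-states, produces an m.m.e.\ of $f$ with $\lambda^c>0$. The delicate step is securing an ergodic Gibbs $u$-state whose base push-forward is Haar (equivalently, a Gibbs $u$-state that is itself an m.m.e.) without appealing to accessibility; this is where the existence machinery of \cite{DFPV, LVY} enters in the full proof.
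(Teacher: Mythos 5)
Most of your outline tracks the paper's own proof: the same case split on whether a zero-center-exponent m.m.e.\ exists, the invariance principle plus unique ergodicity of the irrational rotation and holonomy propagation (using minimality of $\mathcal W^s,\mathcal W^u$ for the quotient Anosov homeomorphism) to get case (1), and, in the hyperbolic case, the observation that every m.m.e.'s support meets every center leaf, hence contains the minimal leaf $L$, so two m.m.e.'s of the same sign would violate the disjointness of supports from Proposition \ref{u-sature}. (The paper actually upgrades this to $\supp(\mu)=\mathbb T^3$ via $u$-saturation and density of $\mathcal F^{cu}(p)$, but your version of the contradiction is equivalent. In case (1) you should also say explicitly that the rotation-extension structure forces \emph{every} invariant measure to have vanishing center exponent, which is what rules out a coexisting hyperbolic m.m.e.; the paper makes this step explicit.)

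The genuine gap is the production of the second m.m.e., of opposite center sign, in case (2). Your Gibbs $u$-state argument rests on two unjustified identities. First, $\pi_*\mu^u=\mathrm{Haar}$ is not automatic: a Gibbs $u$-state projects to an SRB-like measure for the quotient, and since the semiconjugacy to the linear model $A$ is only a homeomorphism, there is no reason this projection is the measure of maximal entropy of the base. Second, $\lambda^u(\mu^u)=h_\top(A)$ is false in general for the same reason — the unstable exponent of $f$ along $E^u$ is not controlled by the topological conjugacy class of $f_c$. Even granting both, the chain $h_{\mu^u}=\lambda^u(\mu^u)$ combined with Ruelle's inequality $h_{\mu^u}\le \lambda^u(\mu^u)+\max(0,\lambda^c)$ yields no upper bound on $\lambda^c$, so the conclusion $\lambda^c(\mu^u)\le 0$ does not follow. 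The paper avoids all of this with the \emph{twin measure} construction: starting from the m.m.e.\ $\mu$ with $\lambda^c(\mu)<0$ (whose existence comes from \cite{DFPV,LVY} plus the standing assumption that no m.m.e.\ has zero exponent), the measurable map $\beta(x)=\partial^+\mathcal W^{ws}(x)$ sending a $\mu$-typical point to the boundary of its Pesin weak-stable arc inside $\Fc(x)$ conjugates $(f,\mu)$ to $(f,\beta_*\mu)$; the twin $\beta_*\mu$ is then an m.m.e.\ with non-negative, hence positive, center exponent, and its uniqueness follows by the same support argument. You should replace the Gibbs-state step with this (or an equivalent) construction.
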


Observe that in the above theorem we are not assuming accessibility of $f$. Instead we are assuming existence of a periodic leaf with irrational rotation dynamics. We also give examples (subsection \ref{lastexample}) for each item of the  dichotomy announced in the above theorem.



\section{Quotient Dynamics and Conditional measures}

Let $f : \mathbb{T}^3 \rightarrow \mathbb{T}^3$ be a $C^1-$ partially hyperbolic diffeomorphism with compact center leaves. We define the quotient space $M_c$ and quotien dynamics $f_c$ by means of the natural projection $\pi : \mathbb{T}^3 \rightarrow M_c := \mathbb{T}^3/\mathcal{F}^c$ which sends every point in $\mathcal{F}^c(x)$ to $[x].$ It is known that $M_c$ is homeomorphim to $\mathbb{T}^2$ and $f_c$ is an Anosov homeomorphism (See Theorem 3 in \cite{RHRHTU}). So it is topologically conjugate to an Anosov diffeomorphism on $\mathbb{T}^2.$ 

From now on we fix some notations: $\mathcal{F}^s, \mathcal{F}^c$ and $\mathcal{F}^u$ are respectively stable, center and the unstable foliation invariant by $f.$ 
The points in $M_c$ are denoted by $[.]$. For any point $[x] \in M_c$ by $\mathcal{W}^s ([x])$ and $\mathcal{W}^u([x])$ we mean the stable and unstable set of $[x].$  As $f$ is conjugate to an Anosov diffeomorphism on $\mathbb{T}^2$ it comes out that both (topological) foliations $\mathcal{W}^u$ and $\mathcal{W}^s$ are minimal.  Observe that 
\begin{itemize}
\item $\pi (\mathcal{F}^{cs}) (x) = \mathcal{W}^s(\pi(x)),$
\item $\pi (\mathcal{F}^{cu}) (x) = \mathcal{W}^u(\pi(x)).$
\end{itemize}
 
Moreover, $f_c$ has a unique measure of maximal entropy $\eta$ which is fully supported on $M_c.$ This measure  has local product structure:
For any $[x] \in M_c$ there exists an open set $U$ around $[x],$ a homeomorphism $$\alpha: \mathcal{W}^s_{loc} ([x]) \times \mathcal{W}^u_{loc} ([x]) \rightarrow U$$  and measures $\eta^u_{[x]}, \eta^s_{[x]}$ supported on respectively $\mathcal{W}^s_{loc} ([x])$ and  $ \mathcal{W}^u_{loc} ([x])$ such that:
$$
 \eta|_{U} \sim \alpha_{*}(\eta^u_{[x]} \times \eta^s_{[x]}),
$$
where $\sim$
 denotes equivalence of measures.

Take any invariant measure $\mu$ for $f$ and let $\nu = \mu\circ \pi^{-1}$. By the Ledrappier-Walters variational principle \cite{LW}
$$
 \sup_{\hat{\mu} :\hat{ \mu} \circ \pi^{-1} = \nu } h_{\hat{\mu}}(f) = h_{\nu} (f_c) +
 \int_{M_c} h(f, \pi^{-1}(y)) d\nu(y).
$$
 Since $\pi^{-1} (y),\, y \in M_{c}$, is a circle and its iterates have bounded length we have that  $h_{top}(f, \pi^{-1}(y)) =0,$  that is,  fibers does not contribute to the entropy.  Hence, by the above equality and the well-known fact that $h_{\mu} (f) \geq h_{\nu}(f_{c})$ we conclude that $h_{\mu}(f) = h_{\nu} (f_{c}).$  Using the usual variational principle this implies that the topological entropies of $f$ and $f_{c}$ coincide.
 In particular,
 the set of entropy maximizing measures of $f$ coincides with the subset of ergodic measures which projects down to $\eta$, the entropy maximizing measure of $f_{c}.$
 \subsection{ Invariance principle and support of m.m.e}
In the setting we are working, as center foliation is given by compact leaves and the quotient space is homeomorphic to $\mathbb{T}^2$ which is a  separable metric space  we can apply Rokhlin disintegration theorem   and conclude that for any probability $\mu$ there is a unique family of conditional measures $\mu^c_{[x]}$ (probability supported on $\mathcal{F}^c(x)$) defined $\eta:=\pi_{*} \mu-$almost every where and $$\mu = \int_{M_c} \mu^{c}_{[x]} d \eta ([x]).$$

Sometimes, it is more conveniente to use the notation $\{\mu^c_x\}_{x \in M}$ for conditional measures along center foliation. Using this notation we have $\mu^c_x = \mu^c_{y}$ for $y \in \mathcal{F}^c (x)$ and $\mu^c_x$'s are probability measures where 
$$
 \mu = \int_M \mu^c_x d \mu(x).
$$

As we mentioned before, if $\mu$ is any measure of maximal entropy then $\nu = \pi_* \mu$ is maximal entropy measure for the quotient dynamics which is Anosov. So $\nu$ has local product structure and one may apply the following invariance principle due to Avila-Viana. Recall that by center Lyapunov exponent $\lambda_c(x)$ we mean the following limit:
$$
 \lim_{n \rightarrow \infty} \frac{1}{n} \log \|Df^n|_{E^c(x)}\|.
$$
By Oseledets theorem, given any invariant probability measure the above limit exists for almost every point $x$.  
 
Now we announce an Invariance principle due to Avila-Viana  \cite{AV}. 
 Let us concentrate on a class of partially hyperbolic dynamics which include the systems under consideration in this paper.
Let $f: M \rightarrow M$ be a partially hyperbolic dynamics satisfying the following conditions:
\begin{itemize}
\item H1. $f$ is dynamically coherent with all center leaves compact,
\item H2. $f$ admits global holonomies, that is, for any $y \in \mathcal{F}^u(x)$ the holonomy map $H^u_{x,y}: \mathcal{F}^c(x) \rightarrow \mathcal{F}^c(y)$ is a homeomorphism. For any $z \in \mathcal{F}^c(x)$, $H^u_{x,y} (z) = \mathcal{F}^u(z) \cap \mathcal{F}^c(y).$
\item H3. $f_c$ is a transitive topological Anosov homeomorphism, where $f_c$ is the induced dynamics satisfying $f_c \circ \pi = \pi \circ f$ and $\pi : M \rightarrow M/\mathcal{F}^c$ is the natural projection to the space of central leaves. In particular there are two foliations $\mathcal{W}^s$ and $\mathcal{W}^u$ which are stable and unstable sets for $f_c.$
\end{itemize}

\begin{theorem}[\cite{AV}] (Invariance Principle)\label{invp}
Let $f:M\rightarrow M$ be a partially hyperbolic diffeomorphism with one-dimensional compact center leaves satisfying $H1, H2$ and $H_3.$  Let
$\mu$ be an $f$-invariant probability measures whose projection
$\nu =\pi_* \mu$
is probability measure that has local product structure. Assume that $\lambda_c (x)=0$ for $\mu-$almost every point. Then $\mu$ admits a disintegration
$\{\mu_{[x]}: \, [x] \in  M_c\}$ which is $s$-invariant and $u$-invariant and whose conditional
probabilities $\mu_{[x]}$ vary continuously with $[x]$
on the support of $\nu$.
\end{theorem}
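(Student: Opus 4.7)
The plan is to follow the Avila–Viana invariance principle, adapted to the present setup of a nonlinear skew product with compact one-dimensional center fibers.

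First I would apply the Rokhlin disintegration theorem to $\mu$ with respect to the measurable partition into center leaves. This is available because $M_c$ is a separable metric space, and yields the essentially unique family $\{\mu_{[x]}\}$ of conditional probabilities supported on the fibers $\mathcal{F}^c(x)$. The $f$-invariance of $\mu$ together with essential uniqueness of the disintegration immediately gives the equivariance $f_*\mu_{[x]} = \mu_{[f_c([x])]}$ for $\nu$-a.e.\ $[x]$.

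The crux is to show that the disintegration is invariant under the unstable holonomy $H^u_{x,y}$ of $\mathcal{F}^c$, in the sense that $(H^u_{x,y})_*\mu_{[x]} = \mu_{[y]}$ for $\nu$-a.e.\ pair with $[y] \in \mathcal{W}^u([x])$ (stable-holonomy invariance then follows by applying the same reasoning to $f^{-1}$). Here the hypothesis $\lambda_c = 0$ enters essentially. The strategy is to combine the Ledrappier–Walters identity already used above, $h_\mu(f) = h_\nu(f_c)$, which forces the fibers to contribute no entropy, with the vanishing of the center exponent, which prevents the center derivative cocycle from generating exponential ``spreading'' of information in the fiber. A conditional martingale argument on the pushforwards of $\mu_{[x]}$ under iteration, composed with holonomy transport, then produces holonomy-invariance of the family on a full $\nu$-measure set.

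The last step promotes almost-everywhere holonomy invariance to a disintegration that is continuous on $\supp(\nu)$. Using the local product structure of $\nu$ I would pick, by Lusin's theorem, a compact set $K \subset \supp(\nu)$ of positive $\nu$-measure on which $[x]\mapsto \mu_{[x]}$ is continuous; then holonomy invariance lets one transport this continuous section along stable and unstable holonomies, extending it to all of $\supp(\nu)$ via minimality of $\mathcal{W}^s$ and $\mathcal{W}^u$ (which holds since $f_c$ is a topologically transitive Anosov homeomorphism).

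The main obstacle is the holonomy-invariance step. In the original Avila–Viana linear-cocycle setting one exploits the projective geometry of the fiber and the trivial Oseledets splitting when all exponents vanish. Here the center fibers are circles and the fiber dynamics is genuinely nonlinear, so what is really needed is the diffeomorphism-cocycle version of the invariance principle. The delicate point is to correctly encode $\lambda_c=0$ together with the equality of base and total entropy as an absence of ``fiberwise information creation,'' which is precisely what forces the conditionals to be constant along holonomy orbits; once this is secured, the upgrade to continuity is comparatively soft.
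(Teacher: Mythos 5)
This theorem is not proved in the paper at all: it is imported verbatim from Avila--Viana \cite{AV}, so there is no in-paper argument to compare yours against. Judged on its own terms, your outline has the right architecture (Rokhlin disintegration, equivariance by essential uniqueness, holonomy invariance as the crux, then an upgrade to continuity using the local product structure of $\nu$), but the step that \emph{is} the theorem --- deducing $s$- and $u$-invariance of the conditionals from $\lambda_c=0$ --- is only gestured at, and the mechanism you name for it cannot be the right one. The Ledrappier--Walters identity $h_\mu(f)=h_\nu(f_c)$ holds here for \emph{every} invariant measure, simply because the fibers are circles whose iterates have bounded length; in particular it holds for the hyperbolic maximal measures, whose center disintegrations are atomic and emphatically not holonomy invariant. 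So ``base entropy equals total entropy'' carries no information about holonomy invariance and cannot, combined with any soft martingale argument, yield it. What is actually needed is the Ledrappier-type criterion at the heart of \cite{AV}: one bounds the entropy of $f$ relative to the $\sigma$-algebra generated by local $\mathcal{F}^u$-saturated sets (a ``transverse'' or partial entropy, not the fiber entropy $h(f,\pi^{-1}(y))$) by the integrated positive part of the center exponent, concludes that this relative entropy vanishes when $\lambda_c=0$, and then invokes the equivalence between vanishing of that relative entropy and $u$-invariance of the disintegration (a genuine martingale-convergence statement, but for the conditional measures along the decreasing filtration of iterated local unstable partitions). Without identifying this quantity and proving the exponent bound --- which is where the $C^2$ regularity and the existence of global holonomies (H2) are used --- the proof does not get off the ground.

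Two smaller points. First, the local product structure of $\nu$ is not merely a convenience for the continuity upgrade; it is what allows one to pass from ``essentially $s$-invariant and essentially $u$-invariant'' to ``invariant on a full-measure bi-saturated set,'' and the continuous version on $\supp(\nu)$ is then obtained from the product structure rather than from minimality of $\mathcal{W}^s$ and $\mathcal{W}^u$ (minimality is not among the hypotheses H1--H3 and is not needed). Second, stable invariance does follow by applying the unstable argument to $f^{-1}$, as you say, but one must then check that the two resulting disintegrations agree, which is again where the product structure of $\nu$ enters.
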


	
	

The following lemma is an immediate corollary of the uniqueness of Rokhlin disintegration theorem and above invariance principle result.
\begin{lemma}  \label{inv-MS}
Let $f:\mathbb{T}^3\longrightarrow\mathbb{T}^3$ be $C^2$ partially hyperbolic diffeomorphism, dynamically coherent with compact central leaves and suppose that $f$ has a maximal entropy ergodic measure $\mu$ with zero exponent. If  $\{\mu_{[x]}\}_{[x]\in\mathbb{T}^3/\mathcal{F}^c}$ is a continuous disintegration of $\mu$ then, {\bf for all} $[x]$:
	
	$$f_*\mu_{[x]}=\mu_{f_c([x])}$$
	
\end{lemma}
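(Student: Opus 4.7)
The plan is to combine the standard equivariance property of Rokhlin disintegrations (which holds $\nu$-almost everywhere) with the continuity of the disintegration provided by the invariance principle, to upgrade almost-everywhere equality to pointwise equality on all of $M_c$.

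First, I would recall the general fact about disintegrations of invariant measures. Let $\nu = \pi_*\mu$, which projects to the unique MME $\eta$ of the Anosov homeomorphism $f_c$. Since $\mu$ is $f$-invariant and $\pi \circ f = f_c \circ \pi$, the family $\{f_*\mu_{[x]}\}_{[x]}$ gives another disintegration of $\mu$ with respect to $\pi$: for any continuous $\phi$,
$$
\int \phi \, d\mu = \int \phi \circ f \, d\mu = \int \Bigl(\int \phi \circ f \, d\mu_{[x]}\Bigr) d\nu([x]) = \int \Bigl(\int \phi \, d(f_*\mu_{[x]})\Bigr) d\nu([x]),
$$
and the family $\{f_*\mu_{[x]}\}$ is supported on $\mathcal{F}^c(f(x)) = \pi^{-1}(f_c[x])$. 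On the other hand, using $f_{c*}\nu=\nu$ and a change of variables, $\{\mu_{f_c([x])}\}$ is also such a disintegration. By the $\nu$-a.e.\ uniqueness clause in Rokhlin's theorem,
$$
f_*\mu_{[x]} = \mu_{f_c([x])} \quad \text{for } \nu\text{-a.e.\ } [x].
$$

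Next, I would invoke Theorem \ref{invp} to obtain the continuous disintegration. The hypotheses of the invariance principle are satisfied: center leaves are compact and one-dimensional; global $u$-holonomies exist (since by the results of \cite{RHRHTU} applied to our setting, the quotient is conjugate to a linear Anosov on $\mathbb{T}^2$ and the center foliation is a true fiber bundle structure); and $f_c$ is a transitive topological Anosov homeomorphism. Since $\lambda_c(\mu)=0$, the invariance principle yields a disintegration that varies continuously in the weak-$*$ topology on the support of $\nu$. Because $\eta$ is the fully supported MME of the Anosov $f_c$, we have $\supp(\nu) = M_c$, so the continuous representative $[x] \mapsto \mu_{[x]}$ is defined and continuous on all of $M_c$.

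Finally, I would upgrade the almost-everywhere equality to a pointwise one. Both maps
$$
[x] \mapsto f_*\mu_{[x]} \quad \text{and} \quad [x] \mapsto \mu_{f_c([x])}
$$
are continuous from $M_c$ to the space of probability measures on $\mathbb{T}^3$ with the weak-$*$ topology: the first because $f$ is continuous and pushforward by a continuous map is weak-$*$ continuous, and the second because $f_c$ is continuous and the chosen disintegration is continuous. These two continuous maps agree on a set of full $\nu$-measure; since $\nu$ is fully supported, this set is dense in $M_c$, and continuity forces equality everywhere. The main (only) subtle point is the verification that the invariance principle applies in our setting so that the continuous disintegration exists on all of $M_c$ rather than merely on a generic invariant subset — once this is granted, the ``everywhere'' strengthening is a soft continuity argument.
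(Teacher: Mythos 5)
Your proposal is correct and follows essentially the same route as the paper: derive the $\nu$-a.e.\ identity $f_*\mu_{[x]}=\mu_{f_c([x])}$ from the uniqueness clause of Rokhlin's theorem, then use continuity of the disintegration together with $\supp(\nu)=M_c$ (full support of the MME of the Anosov quotient) to upgrade to equality everywhere. The only difference is that you re-derive the existence of the continuous disintegration from the invariance principle, whereas the lemma already takes a continuous disintegration as a hypothesis; this is harmless.
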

\begin{proof}
Observe that by invariance of $\mu$ and uniqueness of disintegration for almost every $x$ we have $f_* \mu_x = \mu_{f(x)}.$ As $[x] \rightarrow \mu_{[x]}$ is continuous on $\supp(\pi_{*}\mu) = \mathbb{T}^3/\mathcal{F}^c,$ we get the desired result.

\end{proof}
\subsection{Support of maximal entropy measures}
Let us remind some basic properties of support of measures of maximal entropy.

Ures-Viana-Yang  proved two crucial facts which hold for all systems satisfying hypothesis of Theorem \ref{nilnott3}. 
 
 Although the main theorem in \cite{UVY} is formulated for $M \neq \mathbb{T}^3$, the following proposition holds in the case of partially hyperbolic $f: \mathbb{T}^3 \rightarrow \mathbb{T}^3$ with one dimensional compact center leaves which is the setting we are interested here.  
The proof is exactly the same as in \cite{UVY}. 
 \begin{proposition} \label{u-sature} (\cite{UVY} Lemma 4.4 and Proposition 3.12) The supports of two ergodic m.m.e with negative (positive) exponent are disjoint in the absence of m.m.e with zero central exponent. Moreover the support of any m.m.e with non-positive (resp. non-negative) center exponent is saturated by $\mathcal{F}^u$ (resp. $\mathcal{F}^s$) leaves.
 \end{proposition}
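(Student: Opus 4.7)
The plan is to establish the $\mathcal{F}^u$-saturation statement first, by splitting on the sign of $\lambda_c(\mu)$, and then deduce disjointness of supports via an ergodic-component argument that exploits the no-zero-exponent hypothesis.

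The common setup is that every MME $\mu$ of $f$ projects to the unique MME $\eta$ of the Anosov quotient $f_c$, and since $E^u$ maps isomorphically onto the unstable direction of $f_c$ under $d\pi$, one has
\[
  h_\mu(f) \;=\; h_\eta(f_c) \;=\; \lambda^u(\eta) \;=\; \lambda^u(\mu).
\]
If $\lambda_c(\mu) < 0$, Ruelle's inequality $h_\mu \le \lambda^u(\mu) + \lambda_c(\mu)^+ = \lambda^u(\mu)$ combined with the display forces equality, i.e.\ the Pesin entropy formula. By Ledrappier--Young, $\mu$ then has absolutely continuous conditionals on its Pesin unstable manifolds; since $\lambda_c(\mu) < 0$ these coincide with local $\mathcal{F}^u$-plaques, so $\supp(\mu)$ contains whole local $u$-plaques and, by $f$-invariance, is $\mathcal{F}^u$-saturated. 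If $\lambda_c(\mu) = 0$, hypotheses (H1)--(H3) of Theorem \ref{invp} are satisfied, so the invariance principle gives a continuous disintegration $\{\mu_{[x]}\}$ along center leaves with $(H^u_{x,y})_{*}\mu_{[x]} = \mu_{[y]}$ for every $[y] \in \mathcal{W}^u([x])$. Passing to supports and using that $\eta$ is fully supported on $M_c$ yields the $\mathcal{F}^u$-saturation of $\supp(\mu)$.

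For disjointness, let $\mu_1 \ne \mu_2$ be ergodic MMEs with $\lambda_c(\mu_i) < 0$ and suppose for contradiction that $K := \supp(\mu_1) \cap \supp(\mu_2) \ne \emptyset$. By the saturation, $K$ is closed, $f$-invariant and $\mathcal{F}^u$-saturated, contains a full unstable leaf, and hence $\pi(K) = M_c$ by minimality of $\mathcal{W}^u$ on $M_c$. I would then construct an $f$-invariant probability $\nu$ supported on $K$ with $\pi_{*}\nu = \eta$ --- making $\nu$ an MME of $f$ --- by lifting $\eta$ through a measurable section of the projection $K \to \pi(K) = M_c$ (such a section exists because $K$ meets every center leaf) and symmetrizing by Birkhoff averaging. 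Every ergodic component $\nu'$ of $\nu$ is again an MME, and the standing hypothesis forces $\lambda_c(\nu') \ne 0$. If $\lambda_c(\nu') < 0$, a Hopf-type uniqueness argument for ergodic $u$-Gibbs MMEs sharing a common full $u$-leaf --- both $\nu'$ and $\mu_i$ have $u$-conditionals equivalent to Lebesgue, with density pinned up to normalization by the backward $u$-Jacobian cocycle --- forces $\nu' = \mu_1$ and $\nu' = \mu_2$, contradicting $\mu_1 \ne \mu_2$. The case $\lambda_c(\nu') > 0$ is handled symmetrically via the analogous $s$-saturation for positive-exponent MMEs.

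The main obstacles are the construction of $\nu$ with $\pi_{*}\nu = \eta$ supported on $K$ and the Hopf-type uniqueness step that forces $\nu' = \mu_1 = \mu_2$ from overlapping $u$-saturated supports. Both are technical points imported from \cite{UVY}: the first uses that $K$ is $u$-saturated with $\pi(K) = M_c$, so disintegrations of $\mu_1$ and $\mu_2$ restricted to $K$ can be reweighted to project to $\eta$; the second uses that on a single $u$-leaf the density of an ergodic $u$-Gibbs MME is determined up to scale by the $u$-Jacobian cocycle, so two such measures agreeing on a common leaf must agree on the whole invariant hull of that leaf.
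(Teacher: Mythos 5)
The key step of your argument is false, and both halves of the proof lean on it. From $h_\mu(f)=h_\eta(f_c)$ you cannot deduce $h_\mu(f)=\lambda^u(\mu)$: the quotient $f_c$ is only a topological Anosov \emph{homeomorphism} (the projection $\pi$ is not differentiable and the conjugacy to the linear model is only a homeomorphism), so ``$\lambda^u(\eta)$'' is not defined, and there is no identity tying $h_{top}(f)$ to the unstable exponent of $\mu$ computed for $f$. Concretely, let $f=A\times g$ where $A$ is a $C^\infty$ Anosov diffeomorphism of $\mathbb{T}^2$ whose measure of maximal entropy is \emph{not} its SRB measure and $g$ is Morse--Smale on $\mathbb{S}^1$ with a sink $p$. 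Then $\mu=\mu_A^{\max}\times\delta_p$ is an ergodic m.m.e with $\lambda_c(\mu)<0$, yet $h_\mu(f)=h_{top}(A)<\lambda^u(\mu)$, Ruelle's inequality is strict, the Pesin formula fails, and the conditionals of $\mu$ along $\mathcal{F}^u$-plaques are the (non absolutely continuous) Margulis conditionals of $\mu_A^{\max}$. So m.m.e's with negative center exponent are in general \emph{not} $u$-Gibbs states, and everything built on that premise collapses: the Ledrappier--Young step giving $u$-saturation, and the ``Hopf-type uniqueness for ergodic $u$-Gibbs m.m.e's sharing a leaf'' that is supposed to force $\nu'=\mu_1=\mu_2$ in the disjointness argument. (Your zero-exponent case via the invariance principle is fine, and the observation that $K=\supp(\mu_1)\cap\supp(\mu_2)$ would be a compact, invariant, $u$-saturated set projecting onto $M_c$ and hence carrying an invariant lift of $\eta$ is a reasonable opening move; but with the Gibbs premise gone the contradiction is never reached.)

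For comparison: the paper itself supplies no proof --- it states explicitly that the argument is the one in \cite{UVY} (Lemma 4.4 and Proposition 3.12), transported verbatim to $\mathbb{T}^3$. The mechanism there for $u$-saturation does not use absolute continuity along $\mathcal{F}^u$. Roughly, one disintegrates $\mu$ over the preimage under $\pi$ of a partition subordinate to $\mathcal{W}^u$; these conditionals push forward to the unstable conditionals $\eta^u_{[x]}$ of the Margulis measure, which are \emph{fully supported} in local $\mathcal{W}^u$-plaques because $\eta$ has local product structure and full support. Combined with the atomicity of the center disintegration of a hyperbolic m.m.e (the Ruelle--Wilkinson argument recalled in the twin-measure subsection) and the fact that for $\lambda_c\le 0$ each $\mathcal{F}^u$-plaque inside a $cu$-plaque is mapped homeomorphically by $\pi$ onto a $\mathcal{W}^u$-plaque, one finds entire local $u$-plaques inside $\supp(\mu)$, whence saturation; full support, not absolute continuity, of the unstable conditionals is what drives the proof. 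Disjointness is likewise obtained in \cite{UVY} without any Gibbs property. If you want a self-contained proof, you must replace the Pesin/Ledrappier--Young input by this projection-of-conditionals argument.
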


We just observe that in the case of $M \neq \mathbb{T}^3$, as any partially hyperbolic diffeomorphism is accessible, by Theorem \ref{dichotomy} if there exists any hyperbolic m.m.e, then there does not exist any non-hyperbolic m.m.e. As a consequence, for $M \neq \mathbb{T}^3,$ the  disjointness of supports in the above proposition does not need the hypothesis of absence of non-hyperbolic m.m.e  in \cite{UVY}.

\begin{remark} \label{supports}
Observe that in the case of $M \neq \mathbb{T}^3$ every partially hyperbolic diffeomorphism is accessible and the support of  m.m.e with zero center exponent is the whole $M$ (if it exists). In our setting $M = \mathbb{T}^3$ by above proposition one conclude that the support of any m.m.e with vanishing center exponent is saturated by accessibility classes.
\end{remark}

 The assumption  $M $ being nil-manifold different from $\mathbb{T}^3$ in \cite{UVY} is crucial to conclude that $f$ is dynamically coherent and there exists unique compact invariant, $u-$saturated (respectively $s-$saturated) minimal subset (Proposition 1.9 and 6.4 in \cite{HP14}). So by the above proposition  the authors can prove that there exist at most one m.m.e with negative (resp. positive) center exponent.

\subsection{Twin measure construction}

In this subsection we recall a method to obtain new measures of maximal entropy beginning from hyperbolic m.m.e's which initially appeared in \cite{RHRHTU}  and  denoted by twin measure construction.

Let $f$ be as in Theorem \ref{dichotomy} (without  assuming accessibility property). Suppose $\mu$ is an hyperbolic measure of maximal entropy. Then using an argument as in Ruelle-Wilkinson one concludes that conditional measures of $\mu$ along center leaves are atomic. As $\mu$ is hyperbolic, the center Lyapunov exponent is non-zero and we suppose that it is negative. Then for typical (for measure $\mu$) $x$ the intersection of Pesin stable manifold with center leaf is an open curve $\mathcal{W}^{ws}(x)$ (weak stable manifold) inside $\mathcal{F}^c(x).$ So there exists $Z$ with full $\mu$ measure where points in $Z$ admit Pesin stable manifold. This enables us to define an injective measurable map $\beta: Z \rightarrow \mathbb{T}^3$ by $\beta(x):=  \partial^{+} \mathcal{W}^{ws}(x)$ where $\partial^{+} \mathcal{W}^{ws}(x)$ stands for the boudary point of $ \mathcal{W}^{ws}(x)$ going from $x$ to positive orientation. As we are assuming that $f$ preserves the orientation of $\mathcal{F}^c$ we have that $\beta \circ f = f \circ \beta$ and $\beta$ is an isomorphism between $(f, \mu)$ and $(f, \beta_{*} \mu).$ We call $\beta_{*} \mu$ as a twin measure of $\mu.$ Clearly there may be constructed another twin measure just substituing $\partial^{+} \mathcal{W}^{ws}(x)$ by $\partial^{-} \mathcal{W}^{ws}(x).$ 

It is clear that $\beta_* \mu$ and $\mu$ has the same entropy and so if $\mu$ is m.m.e, its twin is too.

\section{Proof of results}

\subsection{Proof of Theorem \ref{dicho}} Let $\mathcal{F}^c(a)$ be a periodic leaf through $a$ where $f^l \mathcal{F}^c(a)$ is a $1-$Morse-Smale dynamic with attractor $a$ and repeller $r.$ Without loss of generality we assume that $a$ is a fixed point and so $\mathcal{F}^c(a)$ is a fixed leaf.
We consider two following disjoint cases:
\begin{itemize}
\item $f$ admits an ergodic measure of maximal entropy with zero center exponent. 
\item $f$ admits no ergodic measure of maximal entropy with vanishing center exponent.
\end{itemize}

In the first case we will show the existence of an $su-$torus. 
In the second case we prove that either there exists a unique m.m.e with negative center exponent (and a unique one with positive center exponent) or there exists an invariant $su-$torus.

Let us deal with first case:
 let $m$ be maximal measure with zero exponent and $\{m_{[x]}\}_{[x] \in \mathbb{T}^3/\mathcal{F}^c}$ be the disintegration along center leaves. By Lemma \ref{inv-MS} we have $m_{\pi(a)} = c_1 \delta_a + c_2 \delta_r$. Suppose that $c_1 \neq 0.$ As the conditional measures $m_{[x]}$ are invariant under stable and unstable holonomies we have:
$$
\{ h_{\gamma} (a) \cap \mathcal{F}^c(a): \gamma \quad \text{is} \quad su-\text{path}\} \subseteq \supp{m_{\pi(a)}} \subseteq \{a, r\}.
$$

Indeed, 
As $h_{\gamma}$ is a homeomorphism for any $su-$path $\gamma$ and the quotient dynamic is transitive Anosov homeomorphism and accessible, we conclude that:
$$
1 \leq  |\{h_{\gamma}(a) \cap \mathcal{F}^c(p): \gamma \quad su-\text{path} \}| \leq 2,
$$
for any $p \in \mathbb{T}^3$ and any $su-$path $\gamma.$ In other words,   $$Card( Acc(a) \cap \mathcal{F}^c(p)) = Card( Acc(a) \cap \mathcal{F}^c(a)) \leq 2 $$
for any $p \in \mathbb{T}^3.$ 
Using this we can show that $Acc(a)$ is compact. Then 
$$ \pi|_{Acc(a)} : Acc(a) \rightarrow \mathbb{T}^2$$
is a covering map with one or two sheets. As $Acc(a)$ is compact and connected, it is a torus.

Now we deal with the second case: 
First of all, we know that $f$ at least admits an m.m.e with negative center exponent. Suppose that there are two ergodic measures of maximal entropy $\mu_1, \mu_2$ with negative center exponent (the argument for two measures with positive exponent is similar). Observe that for any maximal measure $\eta$, $\supp(\eta) \cap \mathcal{F}^c(x) \neq \emptyset$ for all $x \in \mathbb{T}^3.$ Indeed, as $\pi_{*} \eta$ is the measure of maximal entropy for a topologically transitive Anosov homeomorphism, it is fully supported on $\mathbb{T}^3/ \mathcal{F}^c$ and by definition $\pi (\supp(\eta)) = \supp(\pi_*(\eta))$ and this implies $\supp(\eta) \cap \mathcal{F}^c(x) \neq \emptyset.$ 

\begin{lemma} \label{geral}
There exists $i \in \{1, 2\}$ such that $\supp(\mu_i)$   intersects each center leaf in exactly one point.
\end{lemma}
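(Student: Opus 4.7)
The plan is to determine the intersection of each support $S_i := \supp(\mu_i)$ with the Morse--Smale fixed leaf $\mathcal{F}^c(a)$ and to propagate the description to every center leaf via the global $u$-holonomy, relying on Proposition~\ref{u-sature} throughout. By that proposition (applicable because we are in the case without zero-exponent m.m.e.), $S_1$ and $S_2$ are disjoint, closed, $f$-invariant and $u$-saturated, and as noted just before the lemma each $S_i$ meets every center leaf (since $\pi_*\mu_i$ is the fully supported m.m.e.\ of the Anosov $f_c$). Restricting to $\mathcal{F}^c(a)$ (WLOG $a$ is a fixed point), each $S_i \cap \mathcal{F}^c(a)$ is a nonempty closed $f$-invariant subset of a $1$-Morse--Smale circle with unique sink $a$ and unique source $r$; any non-fixed $y \in \mathcal{F}^c(a)$ satisfies $\omega(y) = a$ and $\alpha(y) = r$, so a closed invariant set containing $y$ must contain both $a$ and $r$. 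Therefore $S_i \cap \mathcal{F}^c(a) \in \{\{a\},\{r\},\{a,r\}\}$, and disjointness together with nonemptiness forces, after relabeling,
$$S_1 \cap \mathcal{F}^c(a) = \{a\}, \qquad S_2 \cap \mathcal{F}^c(a) = \{r\}.$$

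Next I will propagate this via the global $u$-holonomy guaranteed by H2. For any $[x] \in \mathcal{W}^u(\pi(a))$ the map $H^u_{\pi(a),[x]}: \mathcal{F}^c(a) \to \mathcal{F}^c(x)$, $z \mapsto \mathcal{F}^u(z) \cap \mathcal{F}^c(x)$, is a homeomorphism. Because $S_1$ is $u$-saturated, the condition $z \in S_1$ is equivalent to $\mathcal{F}^u(z) \subseteq S_1$, and hence to $H^u(z) \in S_1$. Thus $H^u$ sends $S_1 \cap \mathcal{F}^c(a)$ bijectively onto $S_1 \cap \mathcal{F}^c(x)$, giving $|S_1 \cap \mathcal{F}^c(x)| = 1$. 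Minimality of the unstable foliation of $f_c$ makes $\mathcal{W}^u(\pi(a))$ dense in $\mathbb{T}^2$, so this single-point intersection holds on a dense set of center leaves.

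The last step is to extend this to every center leaf, and I expect this to be the main obstacle, since the set-valued map $[x] \mapsto S_1 \cap \mathcal{F}^c(x)$ is only upper semi-continuous in Hausdorff distance and cardinality can a priori jump up on non-generic leaves. The plan is to invoke the Ruelle--Wilkinson-style atomic structure for the hyperbolic ergodic m.m.e.\ $\mu_1$: the conditional measures $\mu_1^c_{[x]}$ consist of $k_1$ Dirac masses of equal weight for $\pi_*\mu_1$-a.e.\ $[x]$, with $k_1$ a constant by ergodicity. Transferring these $k_1$ atoms along $u$-holonomy into leaves of $\mathcal{W}^u(\pi(a))$ and comparing with the single point $\{a\}$ identified in the previous paragraph forces $k_1 = 1$. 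Combined with closedness of $S_1$, $u$-saturation, and the density of $\mathcal{W}^u(\pi(a))$, a bootstrapping argument then yields $|S_1 \cap \mathcal{F}^c(x)| = 1$ for every $x \in \mathbb{T}^3$, proving the lemma with $i = 1$.
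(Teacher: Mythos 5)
Your first two steps are sound and in fact give a sharper starting point than the paper records: identifying $\supp(\mu_1)\cap\mathcal F^c(a)=\{a\}$ and $\supp(\mu_2)\cap\mathcal F^c(a)=\{r\}$ from disjointness of supports, and propagating singleton intersections along the leaves over $\mathcal W^u(\pi(a))$ by $u$-holonomy, are both correct. (A minor slip: a nonempty closed invariant subset of the Morse--Smale circle need not be one of $\{a\},\{r\},\{a,r\}$ --- it could be the orbit closure of a non-fixed point --- but your observation that any non-fixed point forces both $a$ and $r$ into the set, combined with disjointness and nonemptiness of both traces, still yields the stated normalization.)

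The genuine gap is in the third step, which is exactly where the content of the lemma lies. Knowing $|\supp(\mu_1)\cap\mathcal F^c(x)|=1$ over the single unstable leaf $\mathcal W^u(\pi(a))$ --- which is dense but $\pi_*\mu_1$-null --- gives no control over the remaining leaves: the semicontinuity you flag goes the wrong way (limits of singleton fibres can sit strictly inside a larger fibre), you cannot run $u$-holonomy from an arbitrary center leaf into one over $\mathcal W^u(\pi(a))$ because that requires the two leaves to lie over the \emph{same} unstable leaf of $f_c$, and the Ruelle--Wilkinson atom count $k_1$ is an almost-everywhere quantity that neither controls nor is controlled by the support over a measure-zero set of leaves; so $k_1=1$ is not forced by your comparison, and even if it were, the (closed) support could still meet exceptional leaves in several points. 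The missing mechanism --- the one the paper uses --- is the stable direction combined with the Morse--Smale dynamics: if $\supp(\mu_i)$ met some leaf in two points, transport both points by local $u$-holonomy (which preserves the $u$-saturated support) to a leaf projecting into $\mathcal W^s(\pi(r))$; their local $s$-holonomy images are two \emph{distinct} points of $\mathcal F^c(r)$, at most one of which is the source $r$, so at least one is attracted to the sink $a$, and since a point is forward-asymptotic to its $s$-holonomy image, the closed invariant set $\supp(\mu_i)$ must contain $a$. If both supports had such a leaf this would place $a$ in both, contradicting disjointness; with your normalization it immediately forces $\supp(\mu_2)$ (the one through $r$) to meet every center leaf exactly once. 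Without this dynamical argument the density and closedness considerations in your sketch do not close the proof.
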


\begin{proof}
 There is a neighbourhood $U \subset \mathbb{T}^2$ around $\pi(r)$ such that for any $x \in U$ by local product structure $\mathcal{W}^u_{loc}(x) \cap \mathcal{W}^s_{loc} (\pi(r)) \neq \emptyset.$We prove the lemma  by contradiction. As $\pi_* (\mu_1) = \pi_*(\mu_2)$ we may suppose that there exists $[p] \in U$ with 
$Card (\pi^{-1}([p]) \cap \supp(\eta) \geq 2$ for $\eta \in \{\mu_1, \mu_2\}.$ 
Let $p_1, q_1 \in \pi^{-1}([p]) \cap \supp(\mu_1)$ and $p_2, q_2 \in \pi^{-1}([p]) \cap \supp(\mu_2).$ 

Consider the local unstable holonomy:
$$
h^u: \pi^{-1}([p]) \rightarrow \pi^{-1}(\mathcal{W}^u_{loc}([p]) \cap \mathcal{W}^s_{loc}(\pi(r))).
$$

By $u-$saturation of support of m.m.e with negative center exponent (Proposition \ref{u-sature}), we have $\hat{p}_i:= h^u(p_i), \hat{q}_i := h^u(q_i) \in \supp(\mu_i).$ 

Now consider local stable holonomy:

$$
h^s:  \pi^{-1}(\mathcal{W}^u_{loc}([p]) \cap \mathcal{W}^s_{loc}(\pi(r)))  \rightarrow \mathcal{F}^c(r).
$$

As $\mathcal{F}^c(r)$ is one dimensional with $1-$Morse-Smale dynamics for each $i \in \{1,2\}$ we have 
\begin{itemize} 
\item $d(f^n(h^s (\hat{p_i}), a) \rightarrow 0$ or
\item $d(f^n(h^s (\hat{q_i}), a) \rightarrow 0.$
\end{itemize}
 As $\hat{p_i}, \hat{q_i} \in \supp{\mu_i}$ and support of a measure is closed invariant subset then  $a \in \supp(\mu_i)$ for both $i=1, 2$ and this gives a contradiction as $\supp(\mu_1) \cap \supp(\mu_2) = \emptyset$ again using Proposition \ref{u-sature} (Observe that we are in the case where no m.m.e has zero center Lyapunov exponent).

\end{proof}

By the above lemma we conclude that there exists $i \in \{1,2\}$ such that  $$Card (\supp(\mu_i) \cap \{a, r\}) = 1.$$ and for simplicity we denote it by $\mu$ and assume that $r = \supp(\mu) \cap \mathcal{F}^c(r).$

\begin{lemma} 
$\supp(\mu)$ contains both stable and unstable leaf of $r.$
\end{lemma}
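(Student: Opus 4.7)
The plan is to handle the two inclusions separately; the unstable case is essentially immediate from Proposition~\ref{u-sature}, while the stable case will be obtained by promoting Lemma~\ref{geral} into a continuous section and exploiting its equivariance.

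For the unstable manifold, since $\mu$ is hyperbolic with negative center exponent and we are in the case with no m.m.e.\ of zero center exponent, Proposition~\ref{u-sature} gives that $\supp(\mu)$ is $\mathcal{F}^u$-saturated; together with $r \in \supp(\mu)$ this yields $\mathcal{F}^u(r) \subset \supp(\mu)$ at once.

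For the stable manifold, I would first note that Lemma~\ref{geral} makes $\pi|_{\supp(\mu)}$ a continuous bijection from the compact set $\supp(\mu)$ onto $\mathbb{T}^2$, hence a homeomorphism. Its inverse is a continuous section $\sigma : \mathbb{T}^2 \to \mathbb{T}^3$ with $\sigma([r]) = r$, and the $f$-invariance of $\supp(\mu)$ together with $\pi \circ f = f_c \circ \pi$ yields the equivariance $f \circ \sigma = \sigma \circ f_c$. Now pick $[y] \in \mathcal{W}^s([r])$. Since $f_c([r]) = [r]$ and $[y] \in \mathcal{W}^s([r])$ we have $f_c^n([y]) \to [r]$, and continuity of $\sigma$ gives $f^n(\sigma([y])) = \sigma(f_c^n([y])) \to r$. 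This places $\sigma([y])$ in the stable set of the fixed point $r$ for $f$; because $E^c|_r$ is expanding ($r$ is the source of the $1$-Morse-Smale dynamics on $L$) and $E^u|_r$ is expanding by partial hyperbolicity, the only contracting direction at $r$ is $E^s|_r$, so the stable manifold theorem identifies this stable set with $\mathcal{F}^s(r)$. Hence $\sigma(\mathcal{W}^s([r])) \subset \mathcal{F}^s(r) \cap \supp(\mu)$.

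To conclude, for $[y]$ sufficiently close to $[r]$ on $\mathcal{W}^s([r])$ the section value $\sigma([y])$ lies in a small neighbourhood of $r$ and hence in $\mathcal{F}^s_{\loc}(r)$; local transversality of $\mathcal{F}^c$ and $\mathcal{F}^s$ makes $\pi|_{\mathcal{F}^s_{\loc}(r)}$ a homeomorphism onto $\mathcal{W}^s_{\loc}([r])$, and since $\sigma$ is a right inverse for $\pi$ we deduce $\sigma(\mathcal{W}^s_{\loc}([r])) = \mathcal{F}^s_{\loc}(r)$. This puts $\mathcal{F}^s_{\loc}(r)$ inside $\supp(\mu)$; since $\mathcal{F}^s(r) = \bigcup_{n \ge 0} f^{-n}(\mathcal{F}^s_{\loc}(r))$ and $\supp(\mu)$ is $f$-invariant, $\mathcal{F}^s(r) \subset \supp(\mu)$ follows. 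The only delicate point is this local identification $\sigma(\mathcal{W}^s_{\loc}([r])) = \mathcal{F}^s_{\loc}(r)$: equivariance alone places $\sigma([y])$ merely somewhere in $\mathcal{F}^s(r)$, so one must invoke continuity of $\sigma$ together with local transversality of the central and strong stable foliations to rule out that $\sigma([y])$ could land on a different arc of $\mathcal{F}^s(r)$ crossing $\mathcal{F}^c(y)$ far from $r$.
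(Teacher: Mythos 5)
Your proof is correct, and for the stable inclusion it takes a genuinely different route from the paper's (the unstable inclusion is identical: $u$-saturation from Proposition~\ref{u-sature}). The paper works leaf by leaf: for $z\in\mathcal{F}^{cs}(r)$ the unique point of $\supp(\mu)\cap\mathcal{F}^c(z)$ given by Lemma~\ref{geral} must equal $\mathcal{F}^s(r)\cap\mathcal{F}^c(z)$, because otherwise its forward orbit would converge to the attractor $a$ of the Morse--Smale dynamics, forcing $a\in\supp(\mu)$ and contradicting $\supp(\mu)\cap\mathcal{F}^c(r)=\{r\}$. You instead promote Lemma~\ref{geral} into a continuous $f_c$-equivariant section $\sigma$ (continuous bijection from a compact set onto $\mathbb{T}^2$, hence a homeomorphism) and argue directly at the other end of the leaf: $f^n(\sigma([y]))=\sigma(f_c^n([y]))\to\sigma([r])=r$, and since $E^c(r)$ and $E^u(r)$ are both expanding, the stable set of the hyperbolic fixed point $r$ is exactly $\mathcal{F}^s(r)$. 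Both arguments use the $1$-Morse--Smale hypothesis, but the paper exploits the attractor $a$ together with disjointness of supports, while you exploit hyperbolicity of the repeller $r$ together with continuity of $\sigma$ at $[r]$; your version has the advantage of making explicit the ``continuous invariant section'' picture that the paper anyway needs immediately after this lemma (the map $S$), at the cost of having to identify the stable set with the strong stable leaf.

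Concerning the delicate point you flag at the end: the clean way to close it --- and in fact to bypass the whole localization and backward-iteration step --- is the global stable holonomy property (the analogue of hypothesis H2 for $\mathcal{F}^s$, which the paper uses throughout, e.g.\ when it writes $h^s(r)=\mathcal{F}^s(r)\cap\mathcal{F}^c(z)$ as a single point). Granting that $\mathcal{F}^s(r)$ meets each center leaf over $\mathcal{W}^s([r])$ in exactly one point, for any $w\in\mathcal{F}^s(r)$ you get $\sigma(\pi(w))\in\mathcal{F}^s(r)\cap\mathcal{F}^c(w)=\{w\}$, hence $w\in\supp(\mu)$ directly. Relying only on ``continuity plus local transversality'' is not quite enough as stated, since the leaf $\mathcal{F}^s(r)$ is typically dense and returns arbitrarily close to $r$, so excluding a return plaque would require an additional closed--open argument along $\mathcal{W}^s_{\loc}([r])$; the unique-intersection property makes this unnecessary.
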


\begin{proof}
The inclusion of $\mathcal{F}^u(x) \subset \supp(\mu)$ is by $u-$saturtion of support. For the inclusion of the stable leaf, observe that for any point $z \in \mathcal{F}^{cs}(r)$ we have $\supp(\mu) \cap \mathcal{F}^c(z)$ is a unique point. We claim that this unique point should be $h^s(r) =  \mathcal{F}^s(r) \cap \mathcal{F}^c(z)$. Indeed, if not $y \in \supp(\mu) \cap \mathcal{F}^c(z)$  with $\lim_{k \rightarrow \infty} d(f^k(y), a) = 0.$ This implies that $a \in \supp(\mu)$ which is a contradiction.
\end{proof}

To finish the proof of the theorem let $S : \mathbb{T}^3 / 
\mathcal{F}^c \rightarrow \mathbb{T}^3$ be the section which associates $$[p] \rightarrow S([p]) := \supp(\mu) \cap \pi^{-1}([p])$$ for any $[p] \in  \mathbb{T}^3 / 
\mathcal{F}^c.$

As $\supp(\mu)$ is $u-$saturated we have that $S(\mathcal{W}^u([p])) = \mathcal{F}^u(S([p])).$ We have also proved that $\mathcal{F}^s(r) \subset \supp(\mu)$ which yields
 \begin{equation} \label{atomess}
 S(\mathcal{W}^s(\pi(r))) = \mathcal{F}^s(r). 
 \end{equation}

Finally we prove that $\supp(\mu)$ is an $su-$saturated set. Let $q \in \supp(\mu).$ We need to prove that $\mathcal{F}^s(q) \in \supp(\mu).$ We claim that there is a sequence $p_n$ with $\mathcal{F}^s(p_n) \in \supp(\mu).$ As support is a closed set and stable foliation is continuous we conclude that $\mathcal{F}^s(q) \in \supp(\mu).$ Indeed, 
note that $\mathcal{W}^s(\pi(r)) \cap \mathcal{W}^u(\pi(q))$ accumulates on $\pi(q).$  Let $[z_n] \in \mathcal{W}^s(\pi(r)) \cap \mathcal{W}^u(\pi(q))$ converging to $\pi(q)$. Observe that as $\mu$ has negative center exponent 
\begin{equation} \label{atomesu}
S(\mathcal{W}^u(\pi(q))) = \mathcal{F}^u(q).
\end{equation}

Using (\ref{atomess}), (\ref{atomesu}) we have:
\begin{equation} p_n:=S([z_n]) = \mathcal{F}^s(r) \cap \mathcal{F}^u(q) \cap \pi^{-1}([z_n]).
\end{equation}
This concludes the proof of the claim and finishes the proof of Theorem \ref{dicho} in the case where $f$ admits $1-$Morse-Smale dynamics on a periodic center leaf.

The proof of Addendum \ref{aden} is similar to the above theorem. In fact if there are $k + \lfloor \frac{k}{2} \rfloor + 1$ m.m.e with negative center exponent using similar argument as in Lemma \ref{geral} we get at least one  $\mu_i$ such that $\supp(\mu_i)$ intersects each center leaf at just one point and repeating the above arguments we get an $su-$torus.

\subsection{Proof of theorem \ref{rotation}}

\begin{proof} 
	
 We consider following disjoint cases:
\begin{itemize}
	\item $f$ admits an ergodic measure of maximal entropy with zero center exponent. 
	\item $f$ admits no ergodic measure of maximal entropy with vanishing center exponent.
\end{itemize}	
	
First, Let us suppose that $f$ has an ergodic maximal entropy  measure $m$ with zero center Lyapunov exponent. Then, by the invariance principle Theorem \ref{invp}, there exists  continuous disintegration $\{m_x^c\}$ along the central foliation which is invariant be stable and unstable holonomies.

The disintegration $\{m_x^c\}_x$ is invariant and $f^\tau|_{\mathcal{F}^c(a)}\circ h=h\circ R_\alpha$ for $\alpha\in\mathbb{R}\setminus\mathbb{Q}$. Since $R_\alpha$ is uniquely ergodic, then  $h_*Leb_{\mathbb{S}}$ is the unique maximal entropy measure for $f^\tau|_{\mathcal{F}^c(a)}$. Moreover, this meausure has no atoms and satisfies $\supp(m_a)=\supp h_*Leb_{\mathbb{S}}=\mathcal{F}^c(a)$. Using acessibility of the quotient map $f_c:\mathbb{T}^3/\mathcal{F}^c\longrightarrow \mathbb{T}^3/\mathcal{F}^c$, we conclude that $m_x^c$ has no atoms and is fully supported on center leaf for every $x\in\mathbb{T}^3$.

Then, we define an action of $\mathbb{S}$ on $M$ that commutes with $f$. Take an orientation for $\mathcal{F}^c$. Now, identifying $\mathbb{S}$ with $[0,1]/\mbox{mod1}$, define 
$\Gamma:\mathbb{S}\times M\longrightarrow M$ in such a way that $m_x([\Gamma(\theta,x),x])=\theta$. This action is well defined and continuous because $\supp(m_x)=\mathcal{F}^c(x)$, the conditional measures has no atoms and the disintegration is continuous. Thus, by the invariance of the disintegration, we conclude that $f(\Gamma(\theta,x))=\Gamma(\theta,f(x))$. 

Similar to what have been done in \cite{RHRHTU} we conclude that $f$ is conjugate to $\bar{f}$ where $\bar{f}$ is a rigid rotation extension of  Anosov homeomorphism $f_c$. 
As $f$ has a periodic leaf (with period $\tau$ with irrational rotation number dynamics, then there exists a periodic leaf $\mathcal{F}^c(a)$ where $\bar{f}^{\tau}$ is an irrational rotation.

For the uniqueness, it is enough to show that $\bar{f}$ has a unique measure of maximal entropy $m$ such that the conditional measures $m_x$ along center leaves are Lebesgue and the quotient measure is the measure of maximal entropy for $f_c$. Let us suppose that $\eta$  is a maximal entropy ergodic measure. Then, $\lambda^c(\eta)=0$ because $\bar{f}$ acts isometrically on center leaves. If $\{\eta_x\}_x$ is a disintegration of $\eta$, then $\eta_a$ is the unique that satisfies $\eta_a=f^\tau_*\eta_a$ and by consequence, $\eta_a$ is Lebesgue. Since the quotient map $f_c$ is accessible and the disintegration of $\eta$ is invariant by holonomies, we get $\eta_x=m_x$ for every $x\in\mathbb{T}^3$. This concludes the first part.

Now, let us suppose that there is no maximal entropy ergodic measure with zero central expoent and let $\mu$ be an m.m.e with $\lambda_c(\mu)<0$. We will show that $\mu$ is the unique m.m.e with negative central exponent. If  $\eta\neq\mu$, is another m.m.e with $\lambda_c (\eta)<0$, we must have $\supp(\eta)\cap\supp(\mu)=\emptyset$. To get uniqueness is enough to show that $\supp(\mu)=\mathbb{T}^3$.

 Let be $\mathcal{F}^c(p)$ the periodic leaf with irrational dynamics. As observed before, we have $\supp(\mu)\cap \mathcal{F}^c(x)\neq\emptyset$ for all $x\in\mathbb{T}^3$, in particular, $\supp(\mu)\cap \mathcal{F}^c(p)\neq \emptyset$.  By consequence,  $\mathcal{F}^c(p)\subset \supp(\mu)$ since $\mathcal{F}^c(p)$ has irrational dynamics and $\supp(\mu)$ is invariant. Once the $\supp(\mu)$ is saturated by unstable leaves,  we have $\mathcal{F}^{cu}(p)\subset \supp(\mu)$. Besides, $\overline{\mathcal{F}^{cu}(p)}=\mathbb{T}^3$  which gives $\supp(\mu)=\mathbb{T}^3$ and concludes the proof.
 
 A twin argument shows that there exists also a unique measure of maximal entropy with positive exponent.

\end{proof}

\subsection{Proof of Theorem \ref{kan}, Kan type examples} \label{kantype}
The Kan  example \cite{kan} is a map of the cylinder $\mathbb{A}= \mathbb{S}^1 \times [0,1] \rightarrow \mathbb{A}$, defined as:

$$
f(x, t) = (3x  \, \text{mod-} \mathbb{Z}, t+ \frac{t(1-t)}{32} cos(2 \pi x)), \quad \forall (x, t) \in \mathbb{S}^1 \times  [0,1].
$$
This map is well known because of its intricate proeprty of admitting two physical measures with intermingled basins.
Recall that for the basin $B(\mu)$ of an invariant measure $\mu$  is the set of points $x$ such that $$\lim_{n \rightarrow \infty} \frac{1}{n} \sum_{j=0}^{n-1} \delta_{f^{j}(x)} = \mu.$$
An invariant measure is physical if $Leb(B(\mu)) > 0.$ Two physical measures are called intermingled if for every open set $U$ we have $Leb(B(\mu_1) \cap U) > 0$ and $Leb(B(\mu_2) \cap U) > 0.$

In the above Kan's example, the (one dimensional) Lebesgue measures $\mu_1, \mu_2$ on the boundary circles are invariant measures  and Kan proved that $\mu_1$ and $\mu_2$ are physical measures with  $Leb(B(\mu_1) \cup B(\mu_2)) =1$ and more strikely, the basins are intermingled. 

Similarly one may define an skew product on $\mathbb{T}^2 \times [0,1]$ as follows:
$$
F(x, t):= (A(x), f_x(t)), 
$$ where $A:  \mathbb{T}^2 \rightarrow \mathbb{T}^2 $ is a linear Anosov diffeomorphism with eigenvalues $\mu < 1 < \lambda$. For each $x \in \mathbb{T}^2$ the function $f_x: [0, 1] \rightarrow [0,1]$ is a diffeomorphism fixing the boundary of $[0, 1].$ Take two fixed points of $A$ called $p, q.$ We require that $f_p$ and $f_q$ have exactly two fixed points each, a source at $t=1$ (respectively $t=0$) and a sink at $t=0$ (respectively $t=1$). Furthermore, $\mu < |f_x^{'} (t)| < \lambda$ and 
$$
 \int \log f_x^{'}(0) dx < 0 \quad \text{and} \quad  \int \log f_x^{'}(1) dx < 0.
$$ 
Under these conditions $F$ has two intermingled SRB measures which are normalized Lebesgue measure of each boundary torus. Under some more conditions $F$ is also transitive (see \cite{BDV}.) Observe that as $f_x, x \in \mathbb{T}^2$ are orientation preserving if $(x, t) \in B(\mu_1)$ then for all $0 \leq s \leq t$ we have $(x, s) \in B(\mu_1)$. Similarly if $(x, t) \in B(\mu_2)$ then $(x, s) \in B(\mu_2)$ for all $1 \geq s \geq t$. As $B(\mu_1) \cup B(\mu_2)$ has full Lebesgue measure we conclude that for Lebesgue almost every $x \in \mathbb{T}^2$ there exists $\sigma (x) \in (0, 1)$ where 
\begin{equation} \label{sigma(x)}
(x, s) \in B(\mu_1) \quad \text{if} \quad  s < \sigma (x) \, \text{and}\,
(x, s) \in B(\mu_2) \quad \text{if}  \quad  s > \sigma(x).
\end{equation}

To get a diffeomorphism on a boundaryless manifold,
we consider two such examples and glue them  to find a partially hyperbolic diffeomorphism of $\mathbb{T}^3$ admitting two SRB measures (See also \cite{GC} and \cite{VU}). 
 
Take $\tilde{G} : \mathbb{T}^2 \times [0,1] \rightarrow \mathbb{T}^2 \times [0,1] $ as follows:
\begin{equation} \label{doublekan}
\tilde{G}(x, t)= \left\{
\begin{array}{ll}
 (A(x), 1 - \frac{1}{2} f_x(2t)) \qquad &  0 \leq t \leq \frac{1}{2}\\
 (A(x), \frac{1}{2}f_x(2(1-t)) )  &   \frac{1}{2} \leq t \leq 1.
\end{array} \right.
\end{equation}
Clearly the above map defines a diffeomorphism on $G: \mathbb{T}^3 \rightarrow \mathbb{T}^3$ just gluing the boundaries.

Although $G$ is topologically transitive, as $G^2$ is not transitive we conclude that $G$ is not topologically mixing. So it is still a question whether one can construct topologically mixing partially hyperbolic diffeomorphism with intermingled basin physical measures.
However, see the result of Gan and Shi \cite{GS} where they prove $C^2$ robust topological mixing of Kan example on the annulus. We observe also that Bonatti-Potrie \cite{BPo} had constructed examples of mixing diffeomorphisms with intermingles basins physical measures, however their examples are not strongly partially hyperbolic. In fact in their examples $T \mathbb{T}^3= E^{cs} \oplus E^u$ is the invariant tangent bundle splitting.

Observe that the two torus $\{t=0\}, \{t=1/2\}$ are invariant and support the SRB measures with intermingled basins on the $2-$torus. We mention that Ures and Vasquez \cite{VU} proved  the existence of $su-$torus for any partially hyperbolic $C^r, r > 1$ diffeomorphism in $\mathbb{T}^{3}$ with intermingled physical measures. In particular such diffeomorphisms are not accessible and so the set of these diffeomorphisms has empty interior in $C^r$ topology.

The Lebesgue measures supported on these tori $\mu_1, \mu_2$ besides being physical, are also ergodic measures of maximal entropy for $F.$ (and $G$) Both $\mu_1$ and $\mu_2$ have negative center Lyapunov exponent. Observe that $\mu_1$ and $\mu_2$ as measures defined on $\mathbb{T}^3$ have atomic disintegration along the center foliation.

Besides these two measures $F$ (also $G$) admits another (unique) measure of maximal entropy $\nu$ with positive center exponent. In fact $\nu$ is twin measure of both $\mu_1$ and $\mu_2.$

Indeed, $\nu_1 : = \displaystyle{\int_{\mathbb{T}^2} \delta_{\sigma(x)} d Leb(x)}$ is an ergodic $F-$invariant measure such that $\pi_{*} \nu_1 = Leb$ which implies that $\nu_1$ is a maximal entropy measure. Observe that $\sigma(x)$ is the boundary point of the weak (Pesin)stable set of $(x, 0)$ which coincides with the boundary of weak (Pesin) stable set  of $(x, 1)$ for Lebesgue almost every $x.$ So, the center Lyapunov exponent of $\nu_1$ is non-negative.

We claim that $\lambda^c (\nu) \neq 0.$ If it is not the case, then by Invariance principle the disintegration of $\nu$ is defined continuously and this implies that $x \rightarrow \sigma(x)$ can be extended continuously to whole $\mathbb{T}^2$ and this contradicts the fact that the basins of $\mu_1$ and $\mu_2$ are intermingled.  

By construction of $G$ we conclude that $\nu = \frac{1}{2} (\nu_1 + G_{*} \nu_1)$ is ergodic and maximal entropy for $G.$

Finally to construct an example with four m.m.e we just consider $H= G^2$ where $\mu_1, \mu_2$ (resp. $\nu_1, \nu_2$) are respectively two measures of negative (resp. positive) center Lyapunov exponent.

 \subsection{Examples for each item of theorem \ref{rotation}} \label{lastexample}

For item 1, it is enough to consider a Anosov map  $A:\mathbb{T}^2\rightarrow\mathbb{T}^2$, a irrational rotation  $R:\mathbb{S}^1\rightarrow \mathbb{S}^1$  and then, take the product $A\times R: \mathbb{T}^3\rightarrow\mathbb{T}^3$.

 In order to give example for the second item of the dichotomy, note that if $f$ has a periodic leaf with Morse Smale dynamics, then, it cannot be conjugate to rotation extension.
 Thus, it is enough to take a Anosov $A:\mathbb{T}^2\rightarrow\mathbb{T}^2$ with two fixed points  $p$ and $q$ and a family $f_x:\mathbb{S}\rightarrow\mathbb{S}$ of   $C^2$  maps in a way that $f_p$ is a irrational map and $f_q$ is a Morse-Smale map.

In the theorem \ref{rotation}, we do not have the hypothesis of accessibility. One can ask whether there exists any example of  accessible system that satisfies the hypothesis of \ref{rotation} and still has two maximal entropy ergodic measures. Here we give a positive answer to this question.

The idea of the construction is to perturb a rotation extension in a neighborhood of a center leaf using a bump- function. If necessary, we rotate the system afterwards.
  
 Consider an accessible rotation extension $F:\mathbb{T}^2\times\mathbb{S}^1\longrightarrow \mathbb{T}^2\times\mathbb{S}^1$ (See \cite{NT} for such examples). The property of accessibility is open in $C^1$ topology (See \cite{didi}).

By definition  $F=A\ltimes R_x$, where  $R_x$ is a rotation, i.e $F(x, \theta) = (A(x), R_x (\theta))$ We can assume that $R_0=Id_{\mathbb{S}^1}$, if  not,  consider $G=(Id_{\mathbb{T}^2},R_0^{-1})\circ F$. Once the action of rotation preserves the unstable and stable foliation, the new system $G$ is still accessible 

For $t\in\mathbb{R}$, we define $g_t:\mathbb{S}^1\rightarrow \mathbb{S}^1$ by $g_{t}(\xi):=\xi+t \sin(4\pi \xi)$. For small $t$ , $g$ is a  2-Morse-Smale $C^1$ diffeomorphism close to the identity.  

Consider $R>0$ such that, there is a periodic point $p\in\mathbb{T}^2$ with orbit  outside $B(0,R)$. Now, consider a bump-function  $\eta:\mathbb{T}^2\longrightarrow [0,1]$ with $\eta(0)=1$ and $\eta(x)=0$ if $|x|>R$.

Define $H=A\ltimes(g_{l(x)}\circ R_x)$, where $l(x)=t_0\eta(x)$ for some fixed $t_0>0$.

As $F$ is stably accessible in $C^1-$topology and for $t_0$  small enough, then $d_{C^1}(H,F)$ is small  $H$ is accessible. 




	




Let  $\tau(p)$ be the period of $p$. If  $F^{\tau(p)}|\mathcal{F}^c(p)$ is an irrational rotation, then $H$ satifies the properties that we are looking for. If not, perturb a little to obtain irrational rotation. An small perturbation does not destroy the  Morse-Smale dynamics on $\mathcal{F}^c(0)$.

\bibliography{ourbib-BFT2016}{}
\bibliographystyle{plain}
\end{document}